\documentclass[a4paper, reqno, 11pt]{amsart}

\usepackage[english]{babel}
\usepackage{amsmath}
\usepackage{amssymb}
\usepackage{amsthm}
\usepackage{mathrsfs}
\usepackage{enumerate}
\usepackage{ifthen}
\usepackage{bbm}
\usepackage{xcolor}
\usepackage{verbatim}

\usepackage[pdftex,     
plainpages=false,   
breaklinks=true,    
colorlinks=true,
linkcolor=blue,
citecolor=green,
pdftitle={Advection-diffusion equation with rough coefficients},
pdfauthor={P. Bonicatto, G. Ciampa and G. Crippa}
]{hyperref}

\usepackage{graphicx}
\usepackage{pgfplots}
\usepackage{tikz} 
\usepackage{tikz-3dplot}
\usetikzlibrary{patterns, patterns.meta}
\usepackage{float}
\usepackage{mathtools}
\usepackage[font=footnotesize,labelfont=bf]{caption}
\usepackage{xfrac}
\usepackage{bm}
\usepackage{bbm}
\renewcommand{\b}{\bm b}
\newcommand{\X}{\bm X}
\newcommand{\Xe}{\bm X^\e}
\newcommand{\E}{\mathbb{E}}

\newcommand{\margnote}[1]{
\ifthenelse{\boolean{shownotes}}%
{\marginpar{\raggedright\tiny\texttt{#1}}}%
{}%
}

\newcommand{\hole}[1]{
\ifthenelse{\boolean{shownotes}}%
{\begin{center} \fbox{ \rule {.25cm}{0cm}
\rule[-.1cm]{0cm}{.4cm} \parbox{.85\textwidth}{\begin{center}
\texttt{#1}\end{center}} \rule {.25cm}{0cm}}\end{center}}
{}
}
\newtheorem{thm}{Theorem}[section]
\newtheorem{prop}[thm]{Proposition}
\newtheorem{proposition}[thm]{Proposition}
\newtheorem{lem}[thm]{Lemma}
\newtheorem{cor}[thm]{Corollary}

\theoremstyle{definition}
\newtheorem{defn}[thm]{Definition}
\newtheorem{definition}[thm]{Definition}

\newtheorem*{mainthm*}{Selection Theorem}
\newtheorem*{mainthmdue*}{Regularity Theorem}

 \newtheorem{remark}[thm]{Remark}

\DeclareMathOperator{\dive}{\mathrm {div}}

\newcommand{\e}{\varepsilon}		       
\newcommand{\R}{\mathbb{R}}
\newcommand{\T}{\mathbb{T}}

\newcommand{\Z}{\mathbb{Z}}

\newcommand{\de}{\mathrm{d}}

\numberwithin{equation}{section}

\title[On the advection-diffusion equation with rough coefficients: vanishing viscosity]{On the advection-diffusion equation with rough coefficients: weak solutions and vanishing viscosity}

\author{Paolo Bonicatto}
\address[P.\ Bonicatto]{Mathematics Institute, University of Warwick,
	Zeeman Building, CV4 7HP Coventry, UK}
\email{paolo.bonicatto@warwick.ac.uk}

\author[Gennaro Ciampa]{Gennaro Ciampa}
\address[G.\ Ciampa]{Dipartimento di Matematica "Federigo Enriques", Universit\`a degli Studi di Milano, Via Cesare Saldini 50, 20133 Milano, Italy \& BCAM - Basque Center for Applied Mathematics, Mazarredo 14, E48009 Bilbao, Basque Country - Spain}
\email{gennaro.ciampa@unimi.it, gciampa@bcamath.org}

\author[Gianluca Crippa]{Gianluca Crippa}
\address[G. Crippa]{Department Mathematik Und Informatik, Universit\"at Basel, Spiegelgasse 1, CH-4051 Basel, Switzerland}
\email{gianluca.crippa@unibas.ch}

\setlength{\topmargin}{-1cm}
\setlength{\textwidth}{16cm}
\setlength{\textheight}{23cm}
\setlength{\oddsidemargin}{0pt}
\setlength{\evensidemargin}{0pt}

\date{\today}

\begin{document}

\vskip .2truecm

\begin{abstract}

	We deal with the vanishing viscosity scheme for the transport/continuity equation $\partial_t u + \dive (u\b) = 0$ drifted by a divergence-free vector field $\b$. Under general Sobolev assumptions on $\b$, we show the convergence of such scheme to the unique Lagrangian solution of the transport equation. 
	Our proof is based on the use of stochastic flows and yields quantitative rates of convergence. This offers a completely general selection criterion for the transport equation (even beyond the distributional regime) which compensates the wild non-uniqueness phenomenon for solutions with low integrability arising from convex integration constructions, as shown in recent works \cite{CL, MS3, MS, MS2}, and rules out the possibility of anomalous dissipation. 
	
	\footnotesize{
		\vskip .3truecm
		\noindent Keywords: transport/continuity equation, advection-diffusion equation, vanishing viscosity, regular Lagrangian flow, uniqueness, stochastic Lagrangian flow, anomalous dissipation. 
		\vskip.1truecm
		\noindent 2020 Mathematics Subject Classification: 35F10, 35K15, 35Q35}
\end{abstract}

\maketitle

\section{Introduction}

The goal of this paper is
to exploit the advection-diffusion equation 
\begin{equation*}
	\partial_t v + \dive(v\b) = \Delta v
\end{equation*}
to set up the \emph{vanishing viscosity scheme} for the linear transport/continuity equation
\begin{equation*}
	\partial_t u + \dive(u\b) = 0 
\end{equation*}
associated with a Sobolev, divergence-free vector field $\b$. This is carried out in a framework of low integrability for the solution that does not guarantee uniqueness at the level of transport/continuity equation. More precisely, for every $\e>0$ we consider the unique solution $v_\e$ to 
\begin{equation*}
	\partial_t v_\e + \dive(v_\e\b) = \e \Delta v_\e
\end{equation*}
and establish \emph{quantitative} convergence rates for $v_\e \to u^{\sf L}$ as $\e \downarrow 0$, where $u^{\sf L}$ is the Lagrangian solution of the transport equation, that is the solution transported by the flow of $\b$. Such a result fits into a well understood physical mechanism (the zero diffusivity/viscosity limit) and has also its own, mathematical interest: similar schemes have been proposed over the years for different equations (see e.g. for hyperbolic conservation laws \cite{BB, KR}, for fluid-dynamics \cite{masmoudi} and references therein). 

\subsection*{The transport equation drifted by Sobolev vector fields}

Given a vector field $\b \colon [0,T] \times \T^d \to \R^d$ on the $d$-dimensional torus $\T^d:= \R^d/\Z^d$, we consider the initial value problem for the transport/continuity equation associated with $\b$, i.e. 
\begin{equation}\label{eq:te-intro}\tag{TE}
	\begin{cases} 
		\partial_t u + \dive(u\b) = 0 \\
		u |_{t=0} = u_0, 
	\end{cases}
\end{equation}
for a given initial datum $u_0 \colon \T^d \to \R$. It is by now well known that \eqref{eq:te-intro} is deeply connected with the ordinary differential equation associated to $\b$ and more precisely with the \emph{regular Lagrangian flow} of $\b$ (see Definition \ref{def:rlf} below). This concept has proven to be the right generalization of the classical notion of flow in connection with such problems (see e.g. \cite{A04, BC}). If $\b \in L^1_t W^{1,p}_x$, existence and uniqueness of the regular Lagrangian flow $\bm X$ of $\b$ hold \cite{DPL}: in turn, a straightforward computation allows to check that the transport of the initial datum along the characteristics selected by the regular Lagrangian flow always defines a solution to \eqref{eq:te-intro}. More precisely, the function $u^{\mathsf L}(t,x) := u_0(\bm X^{-1}(t,\cdot)(x))$ is the unique distributional solution to \eqref{eq:te-intro}, whenever $u_0 \in L^q$ for some $q \ge 1$ with $\sfrac{1}{p} + \sfrac{1}{q} \le 1$, see \cite{DPL}. We will refer to $u^{\mathsf L}$ as the \emph{Lagrangian solution}. 

\subsubsection*{The need for a selection criterion for \eqref{eq:te-intro}}
A few years ago, in a series of innovative contributions, Modena and Sz\'ekelyhidi constructed a plethora of counterexamples, showing ill-posedness of the problem \eqref{eq:te-intro}. More precisely, in \cite{MS, MS2} the authors have produced divergence-free, Sobolev vector fields $\b \in C_t W^{1,p}_x$ such that \eqref{eq:te-intro} admits infinitely many distributional solutions $u \in C_t L^q_x$, with $\sfrac{1}{p}+\sfrac{1}{q}>1+\sigma(d)$ and $u\b \in L^1$. Here $\sigma(d)$ is a dimensional constant, which has been refined in \cite{MS3} to $\sigma(d) := \sfrac{1}{d}$.
Yet the situation in the intermediate regime $1< \sfrac{1}{p}+\sfrac{1}{q} \le 1+\sigma(d)$ is an open problem (see, however, \cite{CL}). Remarkably, in the works just mentioned, the authors build distributional solutions that do \emph{not} enjoy typical properties of smooth solutions, such as the conservation of the $L^p$ norms. It is therefore natural to ask whether 
such solutions can be obtained as limit of (physically or numerically) significant approximation procedures, as for instance the vanishing viscosity method.

\subsection*{Advection-diffusion equations and the vanishing viscosity scheme}

Given a vector field $\b \colon [0,T] \times \T^d \to \R^d$, one can consider the initial value problem for the \emph{advection-diffusion equation} associated with $\b$, i.e. 
\begin{equation}\label{eq:ad-intro}\tag{ADE}
	\begin{cases} 
		\partial_t v + \dive(v\b) = \Delta v \\
		v |_{t=0} = v_0, 
	\end{cases}
\end{equation}
where $v_0 \colon \T^d \to \R$ is a given initial datum. Due to the presence of the Laplacian, \eqref{eq:ad-intro} is a second-order parabolic partial differential equation. If the vector field $\b$ is smooth, classical existence and uniqueness results are available and can be found in standard PDEs textbooks (see e.g. \cite{EV}). The problem \eqref{eq:ad-intro} has been also studied outside the smooth framework in many classical references (besides quoting again \cite{EV}, we mention the monograph \cite{LA2} and the more recent book \cite{LBL_book}, which is intimately related to a fluid dynamics context). In particular, from \cite{LBL} we know that if $\b \in L^1_t W^{1,1}_x$ and $v_0 \in L^\infty$, then there exists a unique solution $v \in L_t^\infty L_x^2 \cap L_t^2H_x^1$ to \eqref{eq:ad-intro}. This allows one to set up the vanishing viscosity scheme and the main result of the paper is a \emph{quantitative version} of the following theorem:

\begin{mainthm*}[Vanishing viscosity for linear transport]
	Let $\b \in L^1_t W^{1,1}_x$ be a divergence-free vector field on the torus $\T^d$ and let $u_0 \in L^1$ be a given initial datum. Let $(v_0^\e)_\e \subset L^\infty_x $ be \emph{any} sequence of functions such that $v_0^\e \to u_0$ strongly in $L^1_x$.  Consider the \emph{vanishing viscosity solutions}, i.e. the sequence $(v_\e)_{\e>0} \subseteq L^\infty_t L^\infty_x \cap L_t^2 H_x^1$ of solutions to
	\begin{equation}\label{eq:vv-intro}\tag{VV}
		\begin{cases}
			\partial_t v_\e + \b \cdot \nabla v_\e = \e\Delta v_\e &  \text{ in } (0,T) \times \T^d \\
			v_\e\vert_{t=0}=v_0^\e&   \text{ in }  \T^d. 
		\end{cases} 
	\end{equation}
	Then $(v_\e)_{\e>0}$ converges in $C_tL^1_x$ to the Lagrangian solution $u^{\sf L}$ to \eqref{eq:te-intro}.  
\end{mainthm*}

For the precise statements of the quantitative results, we refer the reader to Section \ref{sec:lagrangian_proof} and \ref{sec:rates} below. We highlight the applicability range of such result, which is completely general: dealing with the (extreme) case of a $W^{1,1}$ field and an $L^1$ solution (in space), we prove that the vanishing viscosity scheme \emph{always} acts as a selection principle (even in an integrability regime where the product $u \b$ cannot be defined distributionally) and that the family $(v_\e)_\e$ always selects, in the limit, the Lagrangian solution $u^{\sf L}$.
Observe that, for the Lagrangian solution, all its $L^q$-norms are conserved (recall that we assume the vector field to be divergence-free). In particular, for $u_0 \in L^2$, our result rules out the possibility of anomalous dissipation in the vanishing viscosity limit, that is, it implies that 
\begin{equation*}
	\e \int_0^T \Vert \nabla v_\e \Vert^2_{L^2} \, \de t \to 0, \quad \text{as $\e \to 0$,}
\end{equation*}
see Remark \ref{rmk:no_anom_diss}. As in the case of the advection-diffusion equation \eqref{eq:ad-intro}, distributional solutions of \eqref{eq:te-intro} exist even if we require only integrability assumptions on $\b$. However, in contrast to the case of \eqref{eq:ad-intro}, for vector fields outside the Sobolev class there is a wide literature of counterexamples to the uniqueness for \eqref{eq:te-intro}, see for example \cite{Ai, ABC14, DPL, Dep}. There are many contexts in PDEs (conservation laws, fluid dynamics, etc...) in which the notion of distributional solution is too general to ensure uniqueness and therefore selection criteria are needed to characterize particularly meaningful solutions. The selection problem for the transport equation has already been posed in \cite{CCS2} (see also \cite{DLG}) where the authors considered as a (non) selection criterion the smooth approximation of the vector field. In particular, it is shown that a smooth approximation may produce different (Lagrangian) solutions in the limit if the vector field is not $W^{1,p}$ for any $p$. Moreover, results of Lagrangianity for weak solutions of the 2D Euler equations obtained via vanishing viscosity have been established in \cite{CNSS, CS}, see also \cite{CCS3} for the Lagrangianity of solutions obtained via vortex-blob approximations.

We present two proofs of the Selection Theorem. The first one, presented in Section \ref{sec:lagrangian_proof}, has a Lagrangian nature and is based on the use of stochastic flows (\cite{CJ,CCS4}). 
	Moreover, in Section \ref{sec:rates} we provide \emph{quantitative} rates of convergence of $v^\e$ to $u^{\sf L}$ and also quantitative (in the viscosity) stability estimates for solutions of the advection-diffusion equation. Such rates are compared with the ones obtained in the recent works \cite{BN} and \cite{S21}. The second proof (which is contained in the Appendix) is more Eulerian in spirit and is a slightly expanded version of the one contained in DiPerna-Lions' original contribution \cite{DPL}, based on a duality argument. We offer a comprehensive, detailed proof which ultimately reveals the complete generality of the vanishing viscosity scheme, which is able to bypass the distributional regime. In particular, we cover also the case $p=1$ which was left somehow implicit in \cite{DPL}.

\subsection*{Acknowledgements} Most of this work was developed while the first and second authors were Post-Doc at the Departement Mathematik und Informatik of the University of Basel supported by the ERC Starting Grant 676675 FLIRT. During the final preparation of the manuscript, P.B. has received funding from the European Research Council (ERC) under the European Union's Horizon 2020 research and innovation programme, grant agreement No 757254 (SINGULARITY). G. Ciampa has been supported by the Basque Government under program BCAM- BERC 2022-2025, by the Spanish Ministry of Science, Innovation and Universities under the BCAM Severo Ochoa accreditation SEV-2017-0718 and he is currently supported by the ERC Starting Grant 101039762 HamDyWWa. The authors acknowledge useful discussions with S. Spirito and S. Modena and they are grateful to M. Sorella for some comments on the first version of this work. Finally, the authors wish to thank the anonymous referee for useful comments and C. Le Bris for drawing the reference \cite{Lions_video} to their attention.

\section{Preliminaries and notations}
We begin by fixing the notation and recalling some basic facts we will need in the following. 

\subsection{Notation} Throughout the paper, $d \ge 1$ will be a fixed integer. We will denote by $\T^d := \R^d/\Z^d$ the $d$-dimensional flat torus and by $\mathscr L^d$ the Lebesgue measure on it. We identify the $d$-dimensional flat torus with the cube $[0,1)^d$ and we denote with $\mathsf{d}$ the geodesic distance on $\T^d$, which is given by
\begin{equation*}
	\mathsf{d}(x,y)=\min\{|x-y-k|:k\in\Z^d\,\,\mbox{such that }|k|\leq 2\}.
\end{equation*}
We will use the letters $p,q$ to denote real numbers in $[1,+\infty]$ and $p'$ will be the (H\"older) conjugate to $p$. We will adopt the customary notation for Lebesgue spaces $L^p(\T^d)$ and for Sobolev spaces $W^{k,p}(\T^d)$; in particular, $H^k(\T^d) := W^{k,2}(\T^d)$. We will denote with $\|\cdot\|_{L^p}$ (respectively $\|\cdot\|_{W^{k,p}}$,$\|\cdot\|_{H^k}$) the norms of the aforementioned functional spaces, omitting the domain dependence when not necessary. Every definition below can be adapted in a standard way to the case of spaces involving time, like e.g. $L^1([0,T];L^p(\T^d))$. 

\subsubsection*{Equi-integrability}

We recall the definition of equi-integrability for a family of functions in $L^1$:
\begin{defn}[Equi-integrability]\label{def:equii}
	A family $\{ \varphi_i\}_{i\in I}\subset L^1(\T^d)$ is {\em equi-integrable} if for every $\e>0$ there exists $\delta>0$ such that for every Borel set $E\subset\T^d$ with $\mathscr{L}^d(E)\leq\delta$ it holds 
	\begin{equation*}
		\int_E|\varphi_i|\de x\leq\e \qquad \text{ for every $i \in I$.}
	\end{equation*}
\end{defn}
The following well-known results offer useful criteria to check the equi-integrability of a family of functions in $L^1$:

\begin{thm}[Dunford-Pettis, de la Vall\'ee-Poussin]\label{thm:equiintegrability}
	Let $\{ \varphi_i\}_{i\in I}\subset L^1(\T^d)$ be a bounded family. Then the following are equivalent:
	\begin{enumerate}
		\item[\rm (i)] $\{ \varphi_i\}_{i\in I}$ is equi-integrable;
		\item[\rm (ii)] $\{ \varphi_i\}_{i\in I}$ is weakly sequentially pre-compact in $L^1(\T^d)$;
		\item[\rm (iii)] there exists a non-negative, increasing, convex function $\Psi \colon [0,+\infty) \to [0,+\infty)$ such that
		\begin{equation*}
			\lim_{t\to+\infty}\frac{\Psi(t)}{t}=+\infty \quad \text{ and } \quad \sup_{i\in I}\int_{\T^d}\Psi(|\varphi_i|)\de x<+\infty.
		\end{equation*}
	\end{enumerate}
\end{thm}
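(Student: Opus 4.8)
The plan is to prove the three stated equivalences by establishing the four implications (iii) $\Rightarrow$ (i), (i) $\Rightarrow$ (iii) (the \emph{de la Vall\'ee-Poussin} part) and (i) $\Leftrightarrow$ (ii) (the \emph{Dunford-Pettis} part), dealing first with the two elementary ones and keeping the genuinely hard equivalence for last.

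For (iii) $\Rightarrow$ (i) one only uses the superlinearity of $\Psi$: set $C:=\sup_{i}\int_{\T^d}\Psi(|\varphi_i|)\,\de x<+\infty$, fix $\eta>0$, and choose $M>0$ so large that $\Psi(t)\ge\eta^{-1}t$ for all $t\ge M$; then for every Borel set $E\subset\T^d$ and every $i$,
\begin{equation*}
	\int_E|\varphi_i|\,\de x=\int_{E\cap\{|\varphi_i|\le M\}}|\varphi_i|\,\de x+\int_{E\cap\{|\varphi_i|>M\}}|\varphi_i|\,\de x\le M\,\mathscr L^d(E)+\eta\,C,
\end{equation*}
so choosing $\eta$ small and then $\mathscr L^d(E)\le\delta$ small gives equi-integrability. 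For (i) $\Rightarrow$ (iii) I would run the classical de la Vall\'ee-Poussin argument: since the family is bounded in $L^1$, Chebyshev's inequality yields $\mathscr L^d(\{|\varphi_i|>n\})\le c/n$ uniformly in $i$, and combining this with equi-integrability one selects integers $n_1<n_2<\cdots\uparrow+\infty$ with $\int_{\{|\varphi_i|>n_k\}}|\varphi_i|\,\de x\le 2^{-k}$ for all $i$ and all $k$. Setting $\Psi(t):=\sum_{k\ge1}(t-n_k)_+$, one checks that $\Psi$ is nonnegative, nondecreasing, convex (a sum of convex functions), finite on $[0,+\infty)$ (only finitely many summands are nonzero at each $t$), and superlinear, because $\Psi(t)=\int_0^t g(s)\,\de s$ with $g(t):=\#\{k:n_k\le t\}\to+\infty$. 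Finally, using $(t-n_k)_+\le t\,\mathbbm 1_{\{t>n_k\}}$,
\begin{equation*}
	\int_{\T^d}\Psi(|\varphi_i|)\,\de x\le\sum_{k\ge1}\int_{\{|\varphi_i|>n_k\}}|\varphi_i|\,\de x\le\sum_{k\ge1}2^{-k}=1
\end{equation*}
uniformly in $i$, which is exactly (iii).

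The heart of the matter is the Dunford-Pettis equivalence (i) $\Leftrightarrow$ (ii) (by the Eberlein-\v{S}mulian theorem we may freely interchange weak pre-compactness and weak sequential pre-compactness). For (i) $\Rightarrow$ (ii): identify $\varphi_i$ with the measure $\varphi_i\,\mathscr L^d\in\mathcal M(\T^d)=(C(\T^d))^*$; by boundedness and Banach-Alaoglu a given sequence $(\varphi_{i_n})_n$ has a subsequence with $\varphi_{i_{n_j}}\,\mathscr L^d\weaktos\mu$. Equi-integrability forces $\mu\ll\mathscr L^d$: for open $A$ one has $|\mu|(A)\le\liminf_j\int_A|\varphi_{i_{n_j}}|\,\de x$ and one extends to Borel sets by outer regularity, so that $\mathscr L^d(E)\to0$ implies $|\mu|(E)\to0$; by Radon-Nikodym $\mu=\varphi\,\mathscr L^d$ with $\varphi\in L^1(\T^d)$. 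It then remains to upgrade the convergence $\int\varphi_{i_{n_j}}\psi\,\de x\to\int\varphi\psi\,\de x$ from $\psi\in C(\T^d)$ to $\psi\in L^\infty(\T^d)$, which is done via Lusin's theorem: approximate $\psi$ by $\psi_\eta\in C(\T^d)$ with $\|\psi_\eta\|_{L^\infty}\le\|\psi\|_{L^\infty}$ off a set of arbitrarily small measure, the error on the bad set being controlled \emph{uniformly in $j$} precisely by the equi-integrability of $(\varphi_{i_{n_j}})_j$ (and by $\varphi\in L^1$). For (ii) $\Rightarrow$ (i): if the family is not equi-integrable there are $\e_0>0$, indices $i_n$ and Borel sets $E_n$ with $\mathscr L^d(E_n)\to0$ and $\int_{E_n}|\varphi_{i_n}|\,\de x\ge\e_0$; by (ii) a subsequence satisfies $\varphi_{i_{n_j}}\weakto\varphi$ weakly in $L^1$, and testing against indicators $\mathbbm 1_E\in L^\infty$ shows $\lim_j\int_E\varphi_{i_{n_j}}\,\de x=\int_E\varphi\,\de x$ for every Borel $E$; the Vitali-Hahn-Saks theorem then makes the measures $\nu_j:=\varphi_{i_{n_j}}\,\mathscr L^d$ uniformly absolutely continuous with respect to $\mathscr L^d$, and applying this to their Hahn decompositions converts uniform absolute continuity of $(\nu_j)$ into equi-integrability of $(\varphi_{i_{n_j}})_j$, contradicting $\int_{E_{n_j}}|\varphi_{i_{n_j}}|\,\de x\ge\e_0$.

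I expect the Dunford-Pettis step to be the main obstacle; within it the two delicate points are (a) the passage from weak-$*$ convergence of the associated measures (duality with $C(\T^d)$) to genuine weak convergence in $L^1$ (duality with $L^\infty(\T^d)$), where equi-integrability is exactly what makes the Lusin approximation error uniform along the sequence, and (b) the Vitali-Hahn-Saks ingredient (itself a Baire-category/gliding-hump statement) feeding the converse; alternatively one replaces (b) by a direct gliding-hump contradiction. Everything else — the Chebyshev bound, the convexity and superlinearity of the piecewise-linear $\Psi$, and the splitting estimate for (iii) $\Rightarrow$ (i) — is routine.
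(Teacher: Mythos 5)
The paper does not actually prove this statement: Theorem \ref{thm:equiintegrability} is quoted as the classical Dunford--Pettis/de la Vall\'ee-Poussin theorem and used as a black box, so there is no in-paper argument to compare against. Your proof is a correct and complete rendition of the standard arguments: the splitting estimate for (iii) $\Rightarrow$ (i), the piecewise-linear construction $\Psi(t)=\sum_k(t-n_k)_+$ with the levels $n_k$ chosen via Chebyshev plus equi-integrability for (i) $\Rightarrow$ (iii), the Banach--Alaoglu/Radon--Nikodym/Lusin upgrade for (i) $\Rightarrow$ (ii), and the Vitali--Hahn--Saks contradiction (with the Hahn-decomposition step correctly converting uniform absolute continuity of the signed measures $\nu_j$ into control of $|\nu_j|(E)=\int_E|\varphi_{i_{n_j}}|\,\de x$) for (ii) $\Rightarrow$ (i). Two cosmetic points only: the statement asks for an \emph{increasing} $\Psi$, whereas your $\Psi$ vanishes on $[0,n_1]$ and is merely nondecreasing; replacing it by $\Psi(t)+t$ fixes this at no cost since the family is bounded in $L^1$. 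And in the negation of equi-integrability you should note that if the indices $i_n$ took only finitely many values the claim would already fail by absolute continuity of a single $L^1$ function, so the genuinely sequential case is the only one that needs the Vitali--Hahn--Saks machinery. Neither point affects correctness.
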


We finish this subsection with the following useful lemma.
\begin{lem}\label{lem:decomposition-equi}
	Let $\{ \varphi_i\}_{i\in I}\subset L^1(\T^d)$ be a bounded family. Then, $\{ \varphi_i\}_{i\in I}$ is equi-integrable if and only if for any $r\in[1,\infty]$ and $\e>0$ there exist $\{g_i^1\}_{i\in I}\subset L^1(\T^d)$, $\{ g_i^2 \}\subset L^r(\T^d)$, and a constant $C_\e>0$ such that
	\begin{equation*}
		f_i=g_i^1+g_i^2,\hspace{0.5cm}\sup_{i\in I}\|g_i^1\|_{L^1}\leq \e,\hspace{0.5cm}\sup_{i\in I}\|g_i^2\|_{L^r}\leq C_\e.
	\end{equation*}
\end{lem}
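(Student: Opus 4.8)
The plan is to prove both implications through a single device, namely \emph{truncation at a height} $M>0$: for each index $i$ I would write
\[
\varphi_i = g_i^1 + g_i^2, \qquad g_i^1 := \varphi_i\, \mathbbm{1}_{\{|\varphi_i| > M\}}, \qquad g_i^2 := \varphi_i\, \mathbbm{1}_{\{|\varphi_i| \le M\}},
\]
so that the only real work is to choose $M = M(\e)$ large, \emph{uniformly in $i$}. (Here I read the $f_i$ appearing in the statement as $\varphi_i$.)

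For the implication ``equi-integrable $\Rightarrow$ decomposition'', I would fix $r \in [1,\infty]$ and $\e>0$, set $C_0 := \sup_i \|\varphi_i\|_{L^1} < \infty$ (the family is bounded by assumption), and let $\delta>0$ be the parameter provided by equi-integrability for this $\e$. Choosing any $M \ge C_0/\delta$, Chebyshev's inequality gives $\mathscr{L}^d(\{|\varphi_i|>M\}) \le \|\varphi_i\|_{L^1}/M \le \delta$ for every $i$, hence equi-integrability yields $\|g_i^1\|_{L^1} = \int_{\{|\varphi_i|>M\}} |\varphi_i|\,\de x \le \e$; meanwhile $|g_i^2| \le M$ pointwise and $\mathscr{L}^d(\T^d)=1$, so $\|g_i^2\|_{L^r} \le M =: C_\e$ for every $r$.

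For the converse, I would fix $\e>0$ and apply the hypothesis with $r=2$ (any $r>1$ works) and threshold $\e/2$, obtaining $\varphi_i = g_i^1 + g_i^2$ with $\sup_i \|g_i^1\|_{L^1} \le \e/2$ and $\sup_i \|g_i^2\|_{L^2} \le C_\e$. Then, for any Borel set $E \subset \T^d$, Hölder's inequality gives
\[
\int_E |\varphi_i|\,\de x \le \|g_i^1\|_{L^1} + \|g_i^2\|_{L^2}\,\mathscr{L}^d(E)^{1/2} \le \frac{\e}{2} + C_\e\,\mathscr{L}^d(E)^{1/2},
\]
so taking $\delta := (\e/(2C_\e))^2$ yields $\int_E |\varphi_i| \le \e$ whenever $\mathscr{L}^d(E) \le \delta$, uniformly in $i$, which is exactly equi-integrability.

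I do not expect a serious obstacle: this is essentially the ``splitting'' reformulation of uniform integrability. The one point deserving a little attention is that in the converse implication the choice $r=1$ makes the Hölder estimate above useless; this is harmless because the hypothesis is quantified over \emph{all} $r\in[1,\infty]$, so one simply uses $r>1$ (or $r=\infty$, in which case one takes $\delta := \e/(2C_\e)$). Beyond that, the argument amounts to keeping every estimate independent of the index $i$, which the truncation construction does automatically; one also observes a posteriori that the decomposition forces boundedness of the family, since $\|\varphi_i\|_{L^1} \le \e/2 + \|g_i^2\|_{L^1} \le \e/2 + C_\e$.
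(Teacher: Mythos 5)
Your proof is correct and follows essentially the same route as the paper's: the forward implication truncates each $\varphi_i$ at a height comparable to $C/\delta$ and invokes Chebyshev plus equi-integrability, and the converse is the same H\"older splitting with $\delta=(\e/(2C_\e))^{r/(r-1)}$. Your explicit remark that one must take $r>1$ (or $r=\infty$) in the converse, since $r=1$ renders the H\"older estimate useless, is a point the paper's argument leaves implicit.
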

\begin{proof}
	Let $\{ \varphi_i\}_{i\in I}\subset L^1(\T^d)$ be an equi-integrable sequence such that $\sup_{i\in I}\|\varphi_i\|_{L^1}\leq C$. Let $\e>0$ be fixed and let $\delta>0$ as in Definition \ref{def:equii}. Then, we define the set
	$$
	A_\delta^i:=\left\{x\in\T^d: |\varphi_i(x)|>\frac{C}{\delta} \right\},
	$$
	and by Chebishev inequality we know that
	$$
	\sup_{i\in I}\mathscr{L}^d(A_\delta^i)\leq \frac{\delta}{C}\|\varphi_i\|_{L^1}\leq \delta.
	$$
	So, by the equi-integrability 
	$$
	\sup_{i\in I}\int_{A_\delta^i}|\varphi_i|\de x\leq \e, 
	$$
	and it is now clear that, by defining $g_i^1=\varphi_i\chi_{A_\delta^i}$ and $g_i^2=\varphi_i(1-\chi_{A_\delta^i})$, we have that
	$$
	\sup_{i\in I}\|g_i\|_{L^1}\leq \e,\hspace{0.5cm}\sup_{i\in I}\|g_i^2\|_{L^r}\leq C_\e,
	$$
	since $\T^d$ has finite measure.
	
	We now prove the opposite implication. Let $r\in[1,\infty]$ and $\e>0$ be fixed, we consider a decomposition such that
	$$
	\|g_i^1\|_{L^1}\leq \e/2, \hspace{0.5cm}\|g_i^2\|_{L^r}\leq C_\e.
	$$
	Let us check that we can take $\delta=\left(\frac{\e}{2C_\e}\right)^{r/(r-1)}$ in the definition of equi-integrability. Indeed, if $A\subset\T^d$ is such that $\mathscr{L}^d(A)\leq \delta$, we have that
	\begin{equation*}
		\int_A|\varphi_i|\de x\leq \int_A|g_i^1|\de x+\int_A|g_i^2|\de x \leq \|g_i^1\|_{L^1}+\|g_i^2\|_{L^r}\mathscr{L}^d(A)^{(r-1)/r} \leq \e/2+\e/2 = \e. \qedhere 
	\end{equation*}
\end{proof}

\subsubsection*{Some Harmonic Analysis tools}
We will need to work with weak Lebesgue spaces, denoted by $M^p(\T^d)$: for the sake of completeness, we recall here their definition and some useful lemmata. 
\begin{defn}
	Let $u:\T^d\to\R$ be a measurable function. For any $1\leq p<\infty$ we define
	\begin{equation*}
		|||u|||_{M^p}^p=\sup_{\lambda>0}\left\{\lambda^p\mathscr{L}^d\left(\{x\in\T^d: |u(x)|>\lambda\}\right)\right\},
	\end{equation*}
	and we define the weak Lebesgue space $M^p(\T^d)$ as the set of the functions $u:\T^d\to\R$ with $|||u|||_{M^p}<\infty$. By convention, for $p=\infty$ we set $M^\infty(\T^d)=L^\infty(\T^d)$.
\end{defn}
Note that $|||\cdot|||_{M^p}$ is not subadditive, hence it is \emph{not} a norm. As a consequence, $M^p(\T^d)$ is not a Banach space. Moreover, since for every $\lambda>0$
$$
\lambda^p\mathscr{L}^d\left(\{x\in\T^d: |u(x)|>\lambda\}\right)=\int_{|u|>\lambda}\lambda^p\de x\leq\int_{|u|>\lambda}|u(x)|^p\de x\leq \|u\|_{L^p}^p,
$$
we have the inclusion $L^p(\T^d)\subset M^p(\T^d)$ and in particular $|||u|||_{M^p}\leq \|u\|_{L^p}$.
The following lemma shows that we can interpolate the spaces $M^1$ and $M^p$, with $p>1$, obtaining a bound on the $L^1$ norm.
\begin{lem}\label{lem:interp}
	Let $u:\T^d\to[0,\infty)$ be a non-negative measurable function. Then for every $p\in(1,\infty)$ we have the interpolation estimate
	\begin{equation*}
		\|u\|_{L^1}\leq \frac{p}{p-1}|||u|||_{M^1}\left[1+\log\left( \frac{|||u|||_{M^p}}{|||u|||_{M^1}} \right)\right],
	\end{equation*}
	while for $p=\infty$ we have
	\begin{equation*}
		\|u\|_{L^1}\leq |||u|||_{M^1}\left[1+\log\left( \frac{\|u\|_{L^\infty}}{|||u|||_{M^1}} \right)\right].
	\end{equation*}
\end{lem}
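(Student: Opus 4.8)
The plan is to argue through the layer-cake (Cavalieri) representation of the $L^1$ norm. Writing $\mu(\lambda):=\mathscr L^d(\{x\in\T^d:u(x)>\lambda\})$ for the distribution function, one has $\|u\|_{L^1}=\int_0^{+\infty}\mu(\lambda)\,\de\lambda$, and $\mu$ obeys three elementary pointwise bounds: the trivial one $\mu(\lambda)\le\mathscr L^d(\T^d)=1$, the weak-$L^1$ one $\mu(\lambda)\le|||u|||_{M^1}/\lambda$, and the weak-$L^p$ one $\mu(\lambda)\le|||u|||_{M^p}^p/\lambda^p$. Set $A:=|||u|||_{M^1}$ and $B:=|||u|||_{M^p}$. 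I would first reduce to the case $0<A\le B<+\infty$: if $A=0$ then $u=0$ a.e.\ and there is nothing to prove, if $A$ or $B$ is infinite the right-hand side is too, and the inequality $A\le B$ holds on the probability space $\T^d$ because $\lambda^p\mu(\lambda)\ge(\lambda\mu(\lambda))^p$ for every $\lambda$ (as $\mu(\lambda)\le\mathscr L^d(\T^d)=1$ and $p\ge1$), so that taking suprema gives $B^p\ge A^p$.

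Next I would split the integral at a level $\Lambda\ge A$ to be chosen,
\begin{equation*}
\|u\|_{L^1}=\int_0^{\Lambda}\mu(\lambda)\,\de\lambda+\int_{\Lambda}^{+\infty}\mu(\lambda)\,\de\lambda,
\end{equation*}
and estimate the two pieces with the appropriate bounds. On $(0,\Lambda)$ I use $\mu(\lambda)\le\min\{1,A/\lambda\}$, which equals $1$ for $\lambda\le A$ and $A/\lambda$ for $A\le\lambda\le\Lambda$, so that $\int_0^{\Lambda}\mu\le A+A\log(\Lambda/A)$. On $(\Lambda,+\infty)$ I use $\mu(\lambda)\le B^p/\lambda^p$, which is integrable at infinity since $p>1$, giving $\int_{\Lambda}^{+\infty}\mu\le\frac{B^p}{(p-1)\Lambda^{p-1}}$. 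Thus $\|u\|_{L^1}\le A+A\log(\Lambda/A)+\frac{B^p}{(p-1)\Lambda^{p-1}}$ for every $\Lambda\ge A$.

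To conclude I would optimize in $\Lambda$ via the explicit choice $\Lambda:=B^{p/(p-1)}A^{-1/(p-1)}$, which satisfies $\Lambda\ge B\ge A$ and $\Lambda^{p-1}=B^p/A$; then the tail term is exactly $A/(p-1)$ and $\log(\Lambda/A)=\frac{p}{p-1}\log(B/A)$, whence
\begin{equation*}
\|u\|_{L^1}\le A\Big(1+\tfrac{p}{p-1}\log(B/A)\Big)+\tfrac{A}{p-1}=\tfrac{p}{p-1}\,A\Big(1+\log(B/A)\Big),
\end{equation*}
which is the assertion. The case $p=\infty$ is the same computation with $B:=\|u\|_{L^\infty}$ and the tail integral replaced by the remark that $\mu(\lambda)=0$ for $\lambda>B$, so that directly $\|u\|_{L^1}=\int_0^{B}\mu\le A+A\log(B/A)$.

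There is no real obstacle in this argument; the two points that need care are (i) using the trivial bound $\mu\le\mathscr L^d(\T^d)$ near $\lambda=0$ — the weak-$L^1$ bound alone fails to be integrable there, which is precisely why $M^1\not\subset L^1$ and why the $M^p$ information is genuinely needed — and (ii) doing the bookkeeping in the choice of $\Lambda$ precisely enough to land on the sharp constant $\frac{p}{p-1}$ rather than a worse absolute constant. One should also check the degenerate case $A=B$ (attained, e.g., by multiples of the indicator of the whole torus): there one simply takes $\Lambda=A$, the splitting still applies, and the bound reduces to $\|u\|_{L^1}\le\frac{p}{p-1}A$, consistently with the stated inequality.
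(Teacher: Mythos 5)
Your argument is correct: the layer-cake splitting with the three bounds $\mu(\lambda)\le\min\{1,\,|||u|||_{M^1}/\lambda\}$ below the threshold and $\mu(\lambda)\le|||u|||_{M^p}^p/\lambda^p$ above it, followed by the optimization $\Lambda=|||u|||_{M^p}^{p/(p-1)}|||u|||_{M^1}^{-1/(p-1)}$, lands exactly on the stated constant $\tfrac{p}{p-1}$, and your handling of the degenerate cases and of the inequality $|||u|||_{M^1}\le|||u|||_{M^p}$ (which uses that $\T^d$ has measure one) is sound. The paper states this lemma without proof, as a standard harmonic-analysis tool; your computation is precisely the classical interpolation argument it implicitly relies on, so there is nothing further to reconcile.
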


We recall the definition of the Hardy-Littlewood maximal function.
\begin{defn}\label{def:maxfunc}
	Let $f\in L^1(\T^d)$, we define $Mf$ the maximal function of $f$ as
	$$
	Mf(x)=\sup_{r>0}\frac{1}{\mathscr{L}^d(B_r)}\int_{B_r(x)}|f(y)|\de y \hspace{1cm}\mbox{for every }x\in\T^d.
	$$
\end{defn}
The following estimates hold.
\begin{lem}\label{lem:maximal-function}
	For every $1<p\leq\infty$ we have the strong estimate
	$$
	\|Mf\|_{L^p}\leq C_{d,p}\|f\|_{L^p},
	$$
	while for $p=1$ only the weak estimate 
	$$
	|||Mf|||_{M^1}\leq C_d\|f\|_{L^1}
	$$
	holds. 
\end{lem}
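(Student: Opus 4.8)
The plan is to prove the three assertions in turn: the elementary $L^\infty$ bound, the weak $(1,1)$ bound via a covering argument, and then the strong $(p,p)$ bound for $1<p<\infty$ by interpolating between the first two. The case $p=\infty$ is immediate, since each average $\mathscr{L}^d(B_r)^{-1}\int_{B_r(x)}|f|\,\de y$ is bounded by $\|f\|_{L^\infty}$, so that $Mf(x)\le\|f\|_{L^\infty}$ pointwise and hence $\|Mf\|_{L^\infty}\le\|f\|_{L^\infty}$.

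For the weak $(1,1)$ estimate I would fix $\lambda>0$ and study $E_\lambda:=\{x\in\T^d:\ Mf(x)>\lambda\}$. It is convenient first to reduce to radii bounded by a fixed dimensional constant $r_0$: balls of radius $\ge r_0$ have $\mathscr{L}^d$-measure bounded below by some $c_d>0$, hence the corresponding averages never exceed $c_d^{-1}\|f\|_{L^1}$, and this portion of $E_\lambda$ is controlled trivially using $\mathscr{L}^d(\T^d)=1$. For $x$ in the remaining part of $E_\lambda$ I select a ball $B_{r_x}(x)$ with $r_x\le r_0$ and $\mathscr{L}^d(B_{r_x}(x))<\lambda^{-1}\int_{B_{r_x}(x)}|f|\,\de y$. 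Applying the Vitali $5r$-covering lemma to the family $\{B_{r_x}(x)\}$ (which have uniformly bounded radii, so the $5$-fold dilations on $\T^d$ still behave like Euclidean ones) yields a countable, pairwise disjoint subfamily $\{B_{r_j}(x_j)\}_j$ whose dilations $B_{5r_j}(x_j)$ cover $E_\lambda$. Summing and using disjointness gives
\begin{equation*}
	\mathscr{L}^d(E_\lambda)\ \le\ 5^d\sum_j\mathscr{L}^d(B_{r_j}(x_j))\ \le\ \frac{5^d}{\lambda}\sum_j\int_{B_{r_j}(x_j)}|f|\,\de y\ \le\ \frac{5^d}{\lambda}\|f\|_{L^1},
\end{equation*}
which, together with the trivial large-radius part, is exactly $|||Mf|||_{M^1}\le C_d\|f\|_{L^1}$.

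Finally, for $1<p<\infty$ I would run the classical Marcinkiewicz splitting: given $f\in L^p(\T^d)$ and $\lambda>0$, write $f=f\chi_{\{|f|>\lambda/2\}}+f\chi_{\{|f|\le\lambda/2\}}$. The second summand has $L^\infty$-norm at most $\lambda/2$, so by sublinearity of $M$ and the $L^\infty$ bound already established, $\{Mf>\lambda\}\subseteq\{M(f\chi_{\{|f|>\lambda/2\}})>\lambda/2\}$; the weak $(1,1)$ estimate applied to the first summand then gives $\mathscr{L}^d(\{Mf>\lambda\})\le 2C_d\lambda^{-1}\int_{\{|f|>\lambda/2\}}|f|\,\de x$. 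Inserting this into the layer-cake identity $\|Mf\|_{L^p}^p=p\int_0^\infty\lambda^{p-1}\mathscr{L}^d(\{Mf>\lambda\})\,\de\lambda$ and exchanging the order of integration by Tonelli's theorem leaves the elementary inner integral $\int_0^{2|f(x)|}\lambda^{p-2}\,\de\lambda$, which produces $\|Mf\|_{L^p}^p\le\frac{2^pC_d\,p}{p-1}\|f\|_{L^p}^p$, i.e. the strong estimate with $C_{d,p}=(2^pC_d\,p/(p-1))^{1/p}$.

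The only genuinely delicate point is the covering step on the torus: one has to make sure the Vitali lemma and the factor $5^d$ coming from the dilations are invoked only for balls of small, uniformly bounded radius (so that they do not wrap around pathologically), and to peel off the large-radius regime by hand; the remaining two steps are entirely routine.
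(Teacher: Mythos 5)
Your argument is correct: it is the classical Hardy--Littlewood proof (trivial $L^\infty$ bound, weak $(1,1)$ via the Vitali $5r$-covering lemma, then real interpolation through the layer-cake formula), and the paper itself offers no proof of this lemma, merely recalling it as a standard fact. The one point that genuinely needs care on $\T^d$ --- peeling off radii above a fixed dimensional threshold, where the average is already bounded by $c_d^{-1}\|f\|_{L^1}$ and the weak bound follows from $\mathscr{L}^d(\T^d)=1$, before invoking the covering lemma only for small balls --- is exactly the point you flag and handle correctly, so nothing is missing.
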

Finally, we recall the following estimate on the different quotients of a $W^{1,1}$ function.
\begin{lem}\label{lem:diff-quotients}
	Let $f\in W^{1,1}(\T^d)$. Then there exists a negligible set $\mathcal{N}\subset\T^d$ such that
	\begin{equation*}
		|f(x)-f(y)|\leq C(d) \mathsf{d}(x,y)\left( M Df(x)+M Df(y) \right),
	\end{equation*}
	for every $x,y\in\T^d\setminus \mathcal{N}$, where $Du$ is the distributional derivative of $u$. 
\end{lem}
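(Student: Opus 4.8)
The plan is to derive the inequality from two classical facts combined across a single pair of nested balls: a pointwise Poincar\'e-type estimate, and the dyadic comparison between Riesz potentials and the Hardy-Littlewood maximal function. As a preliminary reduction I identify points of $\T^d$ with Euclidean representatives, write $f_E:=\bigl(\mathscr{L}^d(E)\bigr)^{-1}\int_E f\,\de x$ for averages, and use that $\mathsf d$ is bounded on $\T^d$ by a dimensional constant: thus all balls occurring below have radius bounded by a dimensional constant, and torus balls and Euclidean balls of representatives may be used interchangeably, since the estimates only ever invoke the upper bound $\int_{B_r(z)}|Df|\,\de w\le\mathscr{L}^d(B_r(z))\,MDf(z)\le\omega_d r^d\,MDf(z)$, valid on $\T^d$ as well. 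The exceptional set $\mathcal N$ will be the $\mathscr{L}^d$-null set consisting of the points that are not Lebesgue points of $f$, the set $\{MDf=+\infty\}$ (null since $Df\in L^1(\T^d)$ forces $MDf\in M^1(\T^d)$ by Lemma~\ref{lem:maximal-function}, hence finite a.e.), and the points where the pointwise Poincar\'e estimate below fails.

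The first ingredient is the pointwise Poincar\'e inequality: for every $z\in\T^d\setminus\mathcal N$ and every ball $B=B_R(x_0)$ containing $z$,
\begin{equation*}
	|f(z)-f_B|\le C(d)\int_{B_{2R}(z)}\frac{|Df(w)|}{|z-w|^{d-1}}\,\de w .
\end{equation*}
For $f\in C^1$ this is elementary --- write $f(z)-f(w)$ as the integral of $\tfrac{\de}{\de r}f$ along the geodesic joining $z$ to $w$, average over $w\in B$, and change to polar coordinates centered at $z$ --- and it extends to $f\in W^{1,1}(\T^d)$ by mollification, with an exceptional set independent of the ball (see e.g.\ \cite{EV} and the references therein on fine properties of Sobolev functions). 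The second ingredient bounds the Riesz-type potential by the maximal function: decomposing $B_\rho(z)$ into the dyadic annuli $B_{2^{-j}\rho}(z)\setminus B_{2^{-j-1}\rho}(z)$, $j\ge0$, and using $|z-w|^{-(d-1)}\le(2^{-j-1}\rho)^{-(d-1)}$ on the $j$-th of them,
\begin{equation*}
	\int_{B_\rho(z)}\frac{|Df(w)|}{|z-w|^{d-1}}\,\de w\le\sum_{j\ge0}(2^{-j-1}\rho)^{-(d-1)}\int_{B_{2^{-j}\rho}(z)}|Df(w)|\,\de w\le C(d)\,\rho\,MDf(z).
\end{equation*}

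To conclude, given $x,y\in\T^d\setminus\mathcal N$ I put $\delta:=\mathsf d(x,y)$ and $B:=B_{2\delta}(x)$, so that $y\in B$; applying the Poincar\'e inequality on $B$ once at $z=x$ and once at $z=y$, and then the potential bound with $\rho=4\delta$, gives $|f(x)-f_B|\le C(d)\,\delta\,MDf(x)$ and $|f(y)-f_B|\le C(d)\,\delta\,MDf(y)$, whence
\begin{equation*}
	|f(x)-f(y)|\le|f(x)-f_B|+|f_B-f(y)|\le C(d)\,\mathsf d(x,y)\bigl(MDf(x)+MDf(y)\bigr),
\end{equation*}
which is exactly the assertion. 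The step I expect to require the most care is the pointwise Poincar\'e inequality at the borderline exponent $p=1$: one cannot invoke the Lebesgue differentiation theorem for $MDf$, which a priori belongs only to the weak space $M^1(\T^d)$, so the approximation from $C^1$ must be carried out at the level of $f$ itself, using that $\mathscr{L}^d$-a.e.\ point is a Lebesgue point of $f$ and that the right-hand side is stable under mollifying $f$. By contrast, the dyadic summation and the passage from $\R^d$ to $\T^d$ are routine bookkeeping of dimensional constants.
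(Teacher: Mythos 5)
Your argument is correct, and it is the standard proof of this estimate; note that the paper itself states Lemma~\ref{lem:diff-quotients} without proof, as a recalled harmonic-analysis fact (it goes back to the pointwise characterizations of Sobolev functions and is the key technical lemma in the quantitative Lagrangian-flow literature), so there is no in-paper argument to compare against. Your chain --- pointwise Poincar\'e inequality $|f(z)-f_B|\le C(d)\int_{B}\frac{|Df(w)|}{|z-w|^{d-1}}\,\de w$, dyadic-annuli domination of the Riesz potential by $C(d)\,\rho\,MDf(z)$, and the triangle inequality through $f_B$ with $B=B_{2\mathsf d(x,y)}(x)$ --- is exactly the classical route, and the torus adjustments you describe are indeed routine. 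The one step that deserves more than a citation is the one you flag: obtaining the pointwise Poincar\'e inequality for $f\in W^{1,1}$ on a \emph{single} null set valid for \emph{all} balls simultaneously. Passing to the limit along mollifications handles each fixed ball only up to a ball-dependent null set, so one must either restrict to a countable dense family of balls and use continuity of $B\mapsto f_B$, or (cleaner, and worth knowing) bypass the Riesz potential entirely: for $z$ a Lebesgue point of $f$, telescope $|f(z)-f_{B_r(z)}|\le\sum_{j\ge0}|f_{B_{2^{-j}r}(z)}-f_{B_{2^{-j-1}r}(z)}|$ and bound each term by the ordinary integrated Poincar\'e inequality $\fint_{B_\rho}|f-f_{B_\rho}|\le C(d)\,\rho\,\fint_{B_\rho}|Df|\le C(d)\,\rho\,MDf(z)$, which holds for every $W^{1,1}$ function and every ball with no exceptional set; summing the geometric series gives $|f(z)-f_{B_r(z)}|\le C(d)\,r\,MDf(z)$ for every Lebesgue point $z$ and every $r$, after which only a comparison of the averages over $B_{2\delta}(x)$ and $B_{2\delta}(y)$ (two overlapping balls of comparable measure) is needed. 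Either way the conclusion stands, with $\mathcal N$ the union of the non-Lebesgue points of $f$ and $\{MDf=+\infty\}$.
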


\section{The transport equation. Setup of the vanishing viscosity scheme}

\subsection{The transport equation and regular Lagrangian flows}
In the following, we will consider the initial value problem for the tranport/continuity equation 
\begin{equation}\label{eq:transport}
	\begin{cases}
		\partial_t u + \dive(\b u) = 0 & \qquad  \text{ in } (0,T) \times \T^d \\
		u\vert_{t=0}=u_0 & \qquad  \text{ in }  \T^d
	\end{cases}
\end{equation}
where $T>0$, $\bm b \colon [0,T] \times \T^d \to \R^d$ is a given divergence-free vector field and $u_0 \colon \T^d \to \R$ is the initial datum. We will work in Sobolev classes for the velocity field and the equation \eqref{eq:transport} will be understood in the sense of distributions. We explicitly observe that, since we are working on the torus, the integrability of $\b$ is sufficient to prevent the blow up of its trajectories and thus we can work with the \emph{regular Lagrangian flow} of $\b$: 

\begin{defn}[Regular Lagrangian flow]\label{def:rlf}
	Let $\b\in L^1((0,T);L^1(\T^d))$ be a divergence-free vector field. A map $\bm X \colon (0,T)\times(0,T)\times\T^d \to \T^d$ is a {\em regular Lagrangian flow} of $\b$ if the following conditions hold: 
	\begin{itemize}
		\item for a.e. $x\in \T^d$ and for any $t\in[0,T]$ the map $s\in[0,T]\mapsto \X(t,s,x)= \X_{t,s}(x)\in\T^d$ is an absolutely continuous solution of 
		\begin{equation*}
			\begin{cases}
				\partial_s \X_{t,s}(x)=\b(s,\X_{t,s}(x)) & s\in[0,T],\\
				\X_{t,t}(x)=x. 
			\end{cases}
		\end{equation*}
		\item For any $t\in[0,T]$ and $s\in[0,T]$ the map $x\in\T^d\mapsto \X_{t,s}(x)\in\T^d$ is measure-preserving.
	\end{itemize} 
\end{defn}
Existence and uniqueness of the regular Lagrangian flow of a Sobolev, divergence-free vector field $\b$ are ensured by \cite{DPL} and therefore we can give the following definition:
\begin{definition} Let $\b \in L^1((0,T);W^{1,1}(\T^d))$ be a divergence-free vector field and let $\bm X \colon (0,T) \times (0,T) \times \T^d \to \T^d$ be its regular Lagrangian flow. If $u_0 \in L^1(\T^d)$, then the map 
	\begin{equation*}
		u^{\mathsf L}(t,x) := u_0(\X_{t,0}(x))
	\end{equation*}
	is called \emph{Lagrangian solution} to \eqref{eq:transport}. 
\end{definition}

Observe that, under the assumption that $\b$ is divergence-free, if $u_0\b \in L^1(\T^d)$ then the Lagrangian solution is also a distributional solution to \eqref{eq:transport}.

\subsection{Setup of the vanishing viscosity scheme}

For each $\e>0$ we introduce the parabolic problem 
\begin{equation}\label{eq:VV_2}
	\begin{cases}
		\partial_t v_\e + \b \cdot \nabla v_\e = \e\Delta v_\e &  \text{ in } (0,T) \times \T^d \\
		v_\e\vert_{t=0}=v_0^\e&   \text{ in }  \T^d
	\end{cases} 
\end{equation}
being $v_0^\e$ a suitable bounded approximation of the initial datum $u_0$. We recall the following proposition.
\begin{proposition}[{\cite[Proposition 5.3]{LBL}}]\label{prop:parabolic_wp} Let $\bm b \in L^1((0,T); W^{1,1}(\T^d))$ be a divergence-free vector field and let $v_0 \in L^\infty(\T^d)$ be given. Then the problem 
	\begin{equation}\label{eq:VV}
		\begin{cases}
			\partial_t v + \dive(\b v) = \Delta v &  \text{ in } (0,T) \times \T^d \\
			v\vert_{t=0}=v_0 & \text{ in } \T^d 
		\end{cases}
	\end{equation}
	admits a unique 
	solution $v \in L^\infty((0,T); L^2(\T^d)) \cap L^2((0,T),H^1(\T^d))$. Furthermore, it holds $v \in L^\infty((0,T); L^\infty(\T^d))$ and 
	\begin{equation}\label{eq:enunciato_stima_for_all_s}
		\|v \|_{L^\infty((0,T); L^s(\T^d))} \le \|v_0 \|_{L^s(\T^d)}
	\end{equation}
	for any real number $s \in [1,+\infty]$.
\end{proposition}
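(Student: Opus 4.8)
The plan is to get existence by a vanishing mollification of the drift together with the a priori estimates coming from the divergence-free structure, and to get uniqueness by a renormalization (commutator) argument in which the parabolic smoothing plays a crucial role.

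\emph{Existence.} Set $\b_n := \b * \rho_n$ (periodic mollifier) and $v_0^n := v_0 * \rho_n$, so that $\b_n$ is smooth and divergence-free with $\b_n \to \b$ in $L^1((0,T);W^{1,1}(\T^d))$, while $v_0^n \to v_0$ in every $L^s(\T^d)$ with $\|v_0^n\|_{L^s} \le \|v_0\|_{L^s}$. For each $n$ classical parabolic theory provides a unique smooth solution $v_n$ of the equation with data $(\b_n,v_0^n)$. The divergence-free constraint gives the two basic a priori bounds: testing against $v_n$ and using $\int \b_n \cdot \nabla(v_n^2)=0$ yields $\tfrac12\frac{d}{dt}\|v_n\|_{L^2}^2 + \|\nabla v_n\|_{L^2}^2 = 0$, hence a uniform bound in $L^\infty_t L^2_x \cap L^2_t H^1_x$; testing against $|v_n|^{s-2}v_n$ (or invoking the maximum principle for the smooth equation) gives $\|v_n(t)\|_{L^s}\le\|v_0\|_{L^s}$ for all $s\in[1,\infty]$, since the transport contribution vanishes by $\int \b_n\cdot\nabla(|v_n|^s)=0$ and the diffusion contribution has a favourable sign. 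From the equation $\partial_t v_n$ is bounded in $L^2_t H^{-1}_x + L^1_t W^{-1,d/(d-1)}_x$ — here one uses $\b_n v_n$ bounded in $L^1_t L^{d/(d-1)}_x$ via the Sobolev embedding $W^{1,1}\hookrightarrow L^{d/(d-1)}$ together with the $L^\infty$ bound on $v_n$ — so Aubin--Lions gives, along a subsequence, $v_n\to v$ strongly in $L^2((0,T)\times\T^d)$, weakly in $L^2_t H^1_x$ and weakly-$*$ in $L^\infty_{t,x}$. In the weak formulation the linear terms pass to the limit at once, and the drift term $\iint \b_n v_n\cdot\nabla\varphi$ passes because $\b_n\to\b$ strongly in $L^1_{t,x}$ while $v_n\nabla\varphi \weaktos v\nabla\varphi$ in $L^\infty_{t,x}$. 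Weak lower semicontinuity of the norms transfers all the bounds to $v$, which is therefore a solution with the stated regularity and with \eqref{eq:enunciato_stima_for_all_s}.

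\emph{Uniqueness.} Let $v_1,v_2$ be two solutions in $L^\infty_t L^2_x \cap L^2_t H^1_x$ with the same datum. A preliminary step is to show that any such solution satisfies \eqref{eq:enunciato_stima_for_all_s} (in particular is bounded): this follows by running the renormalization below against a convex truncating nonlinearity $\beta_M$ vanishing on $[-\|v_0\|_\infty,\|v_0\|_\infty]$, with the diffusion term producing a good sign. Granting this, $w := v_1 - v_2 \in L^\infty_{t,x}\cap L^2_t H^1_x$ solves the homogeneous equation with $w|_{t=0}=0$. Mollify in space: $w_\rho := w * \rho_\rho$ satisfies
\begin{equation*}
  \partial_t w_\rho - \Delta w_\rho + \b\cdot\nabla w_\rho = r_\rho, \qquad r_\rho := \b\cdot\nabla w_\rho - (\b\cdot\nabla w)*\rho_\rho .
\end{equation*}
By the DiPerna--Lions commutator estimate, in its endpoint form for $\b\in L^1_t W^{1,1}_x$ and $w\in L^\infty$ — for which the quantitative difference-quotient bound of Lemma \ref{lem:diff-quotients} and the weak maximal estimate of Lemma \ref{lem:maximal-function} are exactly the required tools — one has $r_\rho\to 0$ in $L^1((0,T)\times\T^d)$ as $\rho\to 0$. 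Since $w_\rho$ is smooth in space it is an admissible test function; using the divergence-free identity $\int \b\cdot\nabla w_\rho\, w_\rho = \tfrac12\int\b\cdot\nabla(w_\rho^2) = -\tfrac12\int(\dive\b)w_\rho^2 = 0$ one obtains
\begin{equation*}
  \tfrac12\frac{d}{dt}\|w_\rho(t)\|_{L^2}^2 + \|\nabla w_\rho(t)\|_{L^2}^2 = \int_{\T^d} r_\rho(t)\,w_\rho(t)\,dx \le \|r_\rho(t)\|_{L^1_x}\,\|w\|_{L^\infty_{t,x}} .
\end{equation*}
Integrating in time, letting $\rho\to 0$ and using $w_\rho(0)=0$ and $\|r_\rho\|_{L^1_{t,x}}\to 0$ gives $\|w(t)\|_{L^2}=0$ for every $t$, i.e.\ $v_1=v_2$.

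\emph{Main obstacle.} The substantive difficulty is entirely in the uniqueness: one must justify the renormalized (energy) identity for solutions with a merely $W^{1,1}$ drift, i.e.\ make the commutator argument work at the $p=1$ endpoint and, beforehand, upgrade an arbitrary solution in $L^\infty_t L^2_x \cap L^2_t H^1_x$ to a bounded one. Here the Laplacian is an ally rather than an obstruction: it produces the bound $\nabla w\in L^2_{t,x}$, which is precisely what makes the truncated renormalizations $\beta(w)$ belong to $L^2_t H^1_x$ and allows the remainder $\int r_\rho\,w_\rho$ to be controlled through the pairing of $\|r_\rho\|_{L^1}$ (small) with $\|w\|_{L^\infty}$ (finite), bypassing the blow-up of $\nabla w_\rho$. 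All the remaining steps — the a priori estimates, the Sobolev bookkeeping for $\b v$, the compactness and the lower semicontinuity — are routine. (This is, of course, the content of \cite[Proposition 5.3]{LBL}.)
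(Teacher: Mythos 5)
First, a point of reference: the paper does not prove this proposition at all --- it is imported verbatim from \cite[Proposition 5.3]{LBL} and used as a black box. So there is no in-paper proof to compare against; your proposal has to stand on its own. Your existence half does: mollifying the drift and the datum, extracting the $L^\infty_t L^2_x \cap L^2_t H^1_x$ and $L^s$ bounds from the divergence-free structure, and passing to the limit by Aubin--Lions together with the pairing of $\b_n \to \b$ strongly in $L^1$ against $v_n \nabla\varphi$ weakly-$*$ in $L^\infty$ is a correct and standard route to a solution satisfying \eqref{eq:enunciato_stima_for_all_s}.

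The uniqueness half, however, has a genuine gap, and it sits exactly at the step you flag as ``preliminary'': upgrading an arbitrary solution $v \in L^\infty_t L^2_x \cap L^2_t H^1_x$ to a bounded one. You propose to do this by renormalizing with a truncation $\beta_M$, but the only commutator estimate you invoke --- the DiPerna--Lions endpoint via Lemma \ref{lem:diff-quotients} and Lemma \ref{lem:maximal-function}, which pairs $\nabla\b \in L^1_x$ with the solution in $L^\infty_x$ --- is precisely the one that is \emph{not} available before the upgrade, when the solution is only known to lie in $L^2_x$ (for $\b \in W^{1,1} \hookrightarrow L^{d/(d-1)}$ and $v \in L^2$ one does not even get $r_\rho$ bounded in $L^1$, and for $d \ge 3$ the distributional meaning of $\dive(\b v)$ is already delicate). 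Your argument is therefore circular as written: boundedness is needed to justify the renormalization, and the renormalization is what is supposed to deliver boundedness. Your closing remark correctly identifies that the Laplacian must be the way out, but using $\nabla w \in L^2_{t,x}$ only to put $\beta_M(w)$ in $L^2_t H^1_x$ and then still estimating the remainder by $\|r_\rho\|_{L^1}\|w\|_{L^\infty}$ does not break the circle. The actual mechanism in \cite{LBL} is different: one writes the commutator in divergence form, integrates it by parts against $\beta''(w_\rho)\nabla w_\rho$, and gains from the pairing of an $O(\rho)$-small quantity with $\nabla w_\rho$ bounded in $L^2$ --- this is the one genuinely non-routine ingredient of the proposition, and it is exactly the piece missing from your write-up. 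Once boundedness is granted, your final energy/commutator computation for $w = v_1 - v_2 \in L^\infty$ is fine.
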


In view of Proposition \ref{prop:parabolic_wp}, the problem \eqref{eq:VV_2} admits a unique solution $v^\e \in L^\infty((0,T); L^2(\T^d)) \cap L^2((0,T),H^1(\T^d))$, hence 
the family $(v_\e)_{\e>0}$ is well-defined. Our goal will be to establish (weak) compactness bounds on the family $(v_\e)_{\e>0}$ and characterize its limit points. We will show that a ``selection principle'' holds: the sequence $(v_\e)_\e$ always converges as $\e\to 0$ to the Lagrangian solution to \eqref{eq:transport}.	
	
\subsection{Stochastic flows}
We now introduce the {\em stochastic Lagrangian} formulation of the system \eqref{eq:VV_2}. Let $(\Omega, (\mathcal{F}_t)_{t\geq 0}, \mathbb{P})$ be a given filtered probability space, and let $\bm W_t$ be a $\T^d$-valued Brownian motion adapted to the backward filtration, i.e. for any fixed $t\in[0,T]$ and any $s\in[0,t]$, the Brownian motion $\bm W_{s}$ is such that $\bm W_t=0$. We have the following definition.
\begin{defn}[Stochastic flows]\label{def:sf} Let $\e>0$ and let $\b\in L^1((0,T);L^1(\T^d))$ be a divergence-free vector field. A map $\Xe\colon (0,T)\times(0,T)\times\T^d\times\Omega\to \T^d$ is a {\em stochastic flow} of $\b$ if 
	\begin{itemize}
		\item for any $t\in[0,T]$, for any $x\in \T^d$ and for a.e. $\omega\in\Omega$, the map $s \in [0,t] \mapsto \Xe(t,s,x,\omega) = \Xe_{t,s}(x,\omega)\in\T^d$ is a continuous solution to
		\begin{equation}
			\begin{cases}
				\de \Xe_{t,s}(x,\omega)=\b(s,\Xe_{t,s}(x,\omega))\de s + \sqrt{2\e}\, \de \bm W_s(\omega),\hspace{0.5cm}s\in[0,t),\\
				\Xe_{t,t}=x,
			\end{cases}\label{eq:sde}
		\end{equation}
		\item for any $t\in[0,T]$ and $s\in[0,t]$ and a.e. $\omega\in\Omega$ the map $x \in \T^d \mapsto \Xe_{t,s}(x,\omega)\in\T^d$ is measure preserving.
	\end{itemize}
\end{defn}
The celebrated Feynman-Kac formula, see \cite{K}, gives an explicit representation of the solution $v_\e$ of \eqref{eq:VV_2} in terms of the stochastic flow of $\b$, that is
\begin{equation*}
	v_\e(t,x)=\mathbb{E}[v_0^\e(\Xe_{t,0}(x))],
\end{equation*}
where we have used the standard notation $\mathbb{E}[f]$ to denote the average with respect to $\mathbb{P}$, that is
\begin{equation*}
	\E[f]:=\int_\Omega f(\omega)\de \mathbb{P}(\omega).
\end{equation*}
We remark that by considering a divergence-free vector field $\b\in L^1((0,T); W^{1,1}(\T^d))$ we have \emph{strong} existence and pathwise uniqueness for \eqref{eq:sde}: this means that we can construct a solution $\Xe$ to \eqref{eq:sde} on any given filtered probability space equipped with any given adapted Brownian motion, see \cite{CJ}. We remark that, since we are working on the torus, the boundedness assumption in \cite{CJ} can be dropped.

\section{A Lagrangian approach to the vanishing viscosity scheme}
\label{sec:lagrangian_proof}

In this section, we aim at giving a proof (exploiting Lagrangian tecnhiques) of the convergence of the vanishing viscosity scheme. In order to do that, we first establish some stability estimates between the stochastic and the deterministic flows.

\begin{lem}\label{lem:stability-flows}
	Let $\b\in L^1((0,T);W^{1,p}(\T^d))$ be a divergence-free vector field, where $p\geq 1$. Let $\X, \Xe$ be, respectively, the regular Lagrangian flow and the stochastic flow of $\b$. Then, 
	\begin{itemize}
		\item[$(i)$] if $p=1$ and $\b\in L^q((0,T)\times\T^d)$ for some $q>1$, then for every $\gamma>0$ there exists a constant $C_\gamma$ such that for a.e. $t\in [0,T]$ and $s\in [0,t]$
		\begin{equation}
			\label{est:p=1}
			\int_{\T^d}\mathbb{E}[\mathsf{d}(\Xe_{t,s}(x),\X_{t,s}(x))]\de x\leq C(T,p)\left(\sqrt[4]{\e}+\frac{C_\gamma}{|\ln\e|}+\frac{1}{|\ln\sqrt{\e}|}\gamma\left[ 1+\ln^+ \left(\frac{\|\b\|_{L^q}}{\sqrt{\e}\gamma}\right) \right]\right).
		\end{equation}
		\item[$(ii)$] If $p>1$, there exists a constant $C(T,p)$ such that for a.e. $t\in [0,T]$ and $s\in [0,t]$
		\begin{equation}
			\label{est:p>1}
			\int_{\T^d}\mathbb{E}[\mathsf{d}(\Xe_{t,s}(x),\X_{t,s}(x))]\de x\leq C(T,p)\left(\sqrt[4]{\e}+\frac{\|\nabla \b\|_{L^1L^p}}{|\ln\e|}  \right).
		\end{equation}
	\end{itemize}
	Moreover, the estimates \eqref{est:p=1}, \eqref{est:p>1} give the $L^1$-convengence of $\Xe_{t,s}$ towards $\X_{t,s}$ as $\e\to 0$.
\end{lem}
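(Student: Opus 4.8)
The plan is to run a stochastic version of the Crippa--De Lellis estimate, differentiating a logarithmic weight of the distance between the two flows. I would fix $t\in[0,T]$, write $Y_s:=\Xe_{t,s}(x,\omega)$ and $Z_s:=\X_{t,s}(x)$, and lift both trajectories to $\R^d$ (using that $\b$, being $\Z^d$-periodic, lifts to a periodic field there), so that $\mathsf d(Y_s,Z_s)\le|\widetilde Y_s-\widetilde Z_s|=:|w_s|$ and one may work with the smooth function $w\mapsto|w|$ instead of the geodesic distance. For a parameter $\delta\in(0,1)$ to be fixed later, set $\Phi_\delta(w):=\log\big(1+\tfrac{1}{\delta}\sqrt{|w|^2+\delta^2}\,\big)$; this is smooth, $\Phi_\delta(w)\ge\log(1+|w|/\delta)$, $|\nabla\Phi_\delta(w)|\,|w|\le1$ and $|\Delta\Phi_\delta(w)|\le C_d\delta^{-2}$. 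Applying the (backward) It\^o formula to $s\mapsto\Phi_\delta(w_s)$ between $s$ and $t$ (where $w_t=0$), taking expectations — the stochastic integral is a true martingale since $\nabla\Phi_\delta$ is bounded — integrating in $x\in\T^d$, and using Lemma \ref{lem:diff-quotients} together with $\mathsf d(Y_r,Z_r)\le|w_r|$ and $|\nabla\Phi_\delta(w_r)|\,|w_r|\le1$ to control the drift, I obtain
\begin{equation*}
\int_{\T^d}\E\big[\log(1+\mathsf d(Y_s,Z_s)/\delta)\big]\,\de x\le C_d\int_s^t\!\!\int_{\T^d}\E\big[MD\b(r,Y_r)+MD\b(r,Z_r)\big]\,\de x\,\de r+C\,\e\,\delta^{-2}.
\end{equation*}

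In the case $p>1$, the measure-preserving property of both flows gives $\int_{\T^d}\E[MD\b(r,Y_r)]\,\de x=\|MD\b(r,\cdot)\|_{L^1}\le C\|D\b(r,\cdot)\|_{L^p}$ by the strong maximal estimate (Lemma \ref{lem:maximal-function}) and $\mathscr L^d(\T^d)<\infty$, so the right-hand side above is $\le C(\|\nabla\b\|_{L^1L^p}+\e\delta^{-2})$. I would then split $\T^d$ according to whether $\mathsf d(Y_s,Z_s)\le\e^{1/4}$ or not: on the complement $\log(1+\mathsf d/\delta)\ge c|\log\e|$ as soon as $\delta=\e^{1/3}$ and $\e$ is small, so by Chebyshev's inequality and $\mathsf d\le\sqrt d$ one gets $\int_{\T^d}\E[\mathsf d(Y_s,Z_s)]\,\de x\le\sqrt[4]{\e}+C|\log\e|^{-1}(\|\nabla\b\|_{L^1L^p}+\e^{1/3})$, which is \eqref{est:p>1} after absorbing $\e^{1/3}/|\log\e|\le\e^{1/4}$.

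The case $p=1$ is more delicate, since only the weak $(1,1)$ bound holds and $MD\b(r,\cdot)$, while in $M^1$, need not be in $L^1$. Here I would keep track of $f_r(x):=|\nabla\Phi_\delta(w_r)|\,|\b(r,Y_r)-\b(r,Z_r)|$ and use \emph{two} pointwise bounds: $f_r\le C_d(MD\b(r,Y_r)+MD\b(r,Z_r))$, which by measure preservation yields $|||f_r|||_{M^1(\T^d)}\le C\|D\b(r,\cdot)\|_{L^1}$, and $f_r\le\delta^{-1}(|\b(r,Y_r)|+|\b(r,Z_r)|)$, which — \emph{using the extra hypothesis} $\b\in L^q$ — yields $|||f_r|||_{M^q(\T^d)}\le C\delta^{-1}\|\b(r,\cdot)\|_{L^q}$, both uniformly in $\omega$. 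Interpolating between $M^1$ and $M^q$ via Lemma \ref{lem:interp} then gives
\begin{equation*}
\int_{\T^d}\E[f_r]\,\de x\le C\|D\b(r,\cdot)\|_{L^1}\Big[1+\log^+\!\Big(\tfrac{\|\b(r,\cdot)\|_{L^q}}{\delta\,\|D\b(r,\cdot)\|_{L^1}}\Big)\Big].
\end{equation*}
Splitting the time integral at a level $\gamma$ — the part where $\|D\b(r,\cdot)\|_{L^1}$ is large being absorbed into a $\gamma$-dependent constant $C_\gamma$, the remainder producing the term $\gamma[1+\log^+(\|\b\|_{L^q}/(\delta\gamma))]$ — then choosing $\delta=\sqrt\e$ and performing again the $\{\mathsf d\le\e^{1/4}\}$/complement split (so that $\log(\e^{1/4}/\sqrt\e)=\tfrac12|\log\sqrt\e|$ appears in the denominator), I expect to reach \eqref{est:p=1}.

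Finally, for $p>1$ the bound \eqref{est:p>1} vanishes as $\e\to0$, while for $p=1$ letting $\e\to0$ with $\gamma$ fixed in \eqref{est:p=1} gives $\limsup_{\e\to0}\int_{\T^d}\E[\mathsf d(\Xe_{t,s}(x),\X_{t,s}(x))]\,\de x\le C(T,p)\gamma$ (the bracketed logarithm grows like $|\log\sqrt\e|$, cancelling the denominator), and $\gamma\downarrow0$ gives the asserted $L^1$-convergence. I expect the two genuinely nontrivial points to be: (a) making the backward It\^o computation rigorous on the torus — handled through the lift to $\R^d$, the periodicity of $\b$ and of maximal functions, and the measure-preserving property of both flows — and, above all, (b) in the case $p=1$, calibrating the three parameters $\delta$, $\gamma$ and the $\mathsf d$-splitting threshold so that every error term tends to zero; this is exactly where the hypothesis $\b\in L^q$ and the logarithmic loss become unavoidable.
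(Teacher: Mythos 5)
Your overall strategy---the stochastic Crippa--De Lellis argument with a logarithmic functional, It\^o's formula, and the Chebyshev splitting of $\T^d\times\Omega$ into $A_\delta$ and its complement---is exactly the one used in the paper, and your treatment of the case $p>1$ (strong maximal estimate in $L^p$, measure preservation of both flows, then the $\{\mathsf{d}\le\sqrt[4]{\e}\}$ split) is correct and coincides with Step 2 of the paper's proof.

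The case $p=1$, however, contains a genuine gap. Your slice-wise interpolation gives, at each time $r$,
\begin{equation*}
\int_{\T^d}\E[f_r]\,\de x\le C\,\|D\b(r,\cdot)\|_{L^1}\Bigl[1+\log^+\Bigl(\tfrac{\|\b(r,\cdot)\|_{L^q}}{\delta\,\|D\b(r,\cdot)\|_{L^1}}\Bigr)\Bigr],
\end{equation*}
whose time integral is of order $\|\nabla\b\|_{L^1L^1}\,|\log\delta|$ plus lower-order terms. After the Chebyshev step you divide by $|\log\delta|$, so the resulting bound contains the \emph{constant} $C\|\nabla\b\|_{L^1L^1}$, which neither matches \eqref{est:p=1} nor tends to zero as $\e\to0$. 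The proposed splitting of the time integral at a level $\gamma$ cannot repair this: on the set of times where $\|D\b(r,\cdot)\|_{L^1}$ is large you have no $\delta$-independent bound for $\int_{\T^d}\E[f_r]\,\de x$ (the maximal function of a merely $L^1(\T^d)$ function is in general not integrable, only in $M^1$), so that part still carries a factor $|\log\delta|$ multiplied by essentially all of $\|\nabla\b\|_{L^1L^1}$, while the crude bound $f_r\le\delta^{-1}(|\b(r,Y_r)|+|\b(r,Z_r)|)$ there produces a $\delta^{-1}$, which is even worse. The smallness must instead be injected into the $M^1$ quasi-norm of the object you interpolate. This is what the paper does: by Lemma \ref{lem:decomposition-equi} it decomposes $|\nabla\b|=g_1^\gamma+g_2^\gamma$ \emph{in space-time}, with $\|g_1^\gamma\|_{L^1}\le\gamma$ and $\|g_2^\gamma\|_{L^r}\le C_\gamma$ for some $r>1$; the $g_2^\gamma$ part is handled by the strong maximal estimate with no logarithmic loss (producing the $C_\gamma/|\ln\e|$ term in \eqref{est:p=1}), and the interpolation of Lemma \ref{lem:interp} is applied only to the piece built from $g_1^\gamma$, so that the unavoidable $|\log\delta|$ multiplies $\gamma$ rather than $\|\nabla\b\|_{L^1L^1}$. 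With this modification the rest of your argument, including the final limiting procedure ($\e\to0$ first with $\gamma$ fixed, then $\gamma\downarrow0$), goes through.
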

\begin{proof}
	We divide the proof in several steps.
	
	{\bf \emph{Step 1. The case $p=1$. } }For any $t\in(0,T)$, a.e. $\omega\in\Omega$ and a.e. $x\in\T^d$, the difference of the flows $\Xe-\X$ satisfies the following S.D.E. for $s\in[0,t]$
	\begin{equation}\label{eq:difference}
		\begin{cases}
			\de(\Xe_{t,s}(x,\omega)-\X_{t,s}(x))=(\b(s, \Xe_{t,s}(x,\omega))-\b(s,\X_{t,s}(x)))\,\de s+\sqrt{2\e}\de \bm W_{s}(\omega),\\
			\Xe_{t,t}(x,\omega)-\X_{t,t}(x)=0.
		\end{cases}
	\end{equation}
	We define the function the function $q_{\delta}(y)=\ln\left(1+\frac{|y|^2}{\delta^2}\right)$ and the related functional $Q^\delta_\e$ as
	\begin{equation}\label{def:Qdelta}
		Q^\delta_\e(t,s,x,\omega):=q_{\delta}(\Xe_{t,s}(x,\omega)-\X_{t,s}(x))=\ln\left(1+\frac{|\Xe_{t,s}(x,\omega)-\X_{t,s}(x)|^2}{\delta^2}\right),
	\end{equation}
	where $\delta>0$ is a fixed parameter that will be chosen later. An application of It\^{o}'s formula gives that
	\begin{align*}
		\int_{\T^d} \E\left[Q^\delta_\e(t,s,x)\right]\de x&=\int_s^t\int_{\T^d} \E\left[\nabla_{y} q_\delta(t,\tau,\Xe_{t,\tau}(x)-\X_{t,\tau}(x))\cdot\left(\b(\tau,\Xe_{t,\tau}(x))-\b(\tau,\X_{t,\tau}(x)) \right)\right]\de x\de \tau\\ 
		&+\e\int_s^t\int_{\T^d} \E\left[\nabla_{y}^2 q_\delta(t,\tau,\Xe_{t,\tau}(x)-\X_{t,\tau}(x))\right]\de x\de \tau,
	\end{align*}
	and from the inequalities
	$$
	\left| \nabla \ln\left(1+\frac{|y|^2}{\delta^2} \right)\right|\leq\frac{C}{\delta+|y|}, \hspace{0.7cm}\left| \nabla^2 \ln\left(1+\frac{|y|^2}{\delta^2}\right)\right|\leq\frac{C}{\delta^2+|y|^2},
	$$
	we obtain the following bound
	\begin{equation}\label{est:q}
		\int_{\T^d} \E\left[Q^\delta_\e(t,s,x)\right]\de x\leq \frac{\e (t-s)}{\delta^2}+C\int_s^t\int_{\T^d} \E\left[\frac{\left| \b(\tau,\Xe_{t,\tau}(x))-\b(\tau,\X_{t,\tau}(x)) \right|}{\delta+\left| \Xe_{t,\tau}(x)-\X_{t,\tau}(x)\right|}\right]\de x\de \tau.
	\end{equation}
	We now use the characterization of the equi-integrability as in Lemma \ref{lem:decomposition-equi}. We fix $r>1$ and let $\gamma>0$ a parameter that will be chosen later. Then, using Lemma \ref{lem:decomposition-equi} we decompose $\nabla\b$ as 
	$$
	|\nabla \b|=g_1^\gamma+g_2^\gamma,
	$$
	with
	\begin{equation*}
		\|g_1^\gamma\|_{L^1}\leq \gamma,\hspace{0.5cm}\|g_2^\gamma\|_{L^r}\leq C_\gamma,
	\end{equation*}  
	where the constant $C_\gamma$ is increasing as $\gamma\to 0$. Finally, we introduce the function
	\begin{equation*}
		\varphi(t,s,x,\omega):=\min\left\{ \frac{|\b(s,\Xe_{t,s}(x,\omega))|+|\b(s,\X_{t,s}(x))|}{\delta}; g_1^\gamma(s,\Xe_{t,s}(x,\omega))+ g_1^\gamma(s,\X_{t,s}(x))\right\}.
	\end{equation*}
	Going back to \eqref{est:q}, using the definition of $\varphi$, we get that
	\begin{align*}
		& \int_s^t\int_{\T^d}\E\left[\frac{\left| \b(\tau,\Xe_{t,\tau}(x))-\b(\tau,\X_{t,\tau}(x)) \right|}{\delta+\left| \Xe_{t,\tau}(x)-\X_{t,\tau}(x)\right|}\right]\de x\de \tau \\ 
		\leq & \int_s^t\int_{\T^d} \E\left[ \varphi(t,\tau,x)\right] \de x \de \tau \\
		& +\int_s^t\int_{\T^d} \E \left[g_2^\gamma(\tau,\Xe_{t,\tau}(x))+ g_2^\gamma(\tau,\X_{t,\tau}(x))\right] \de x \de \tau.
	\end{align*}
	Since $g_2^\gamma\in L^r((0,T)\times\T^d)$, by Holder inequality we have that
	\begin{equation}\label{g2}
		\int_s^t\int_{\T^d}\E\left[g_2^\gamma(\tau,\Xe_{t,\tau}(x))+ g_2^\gamma(\tau,\X_{t,\tau}(x))\right] \de x \de \tau\leq 2T^{(r-1)/r}C_\gamma.
	\end{equation}
	We now want to apply the interpolation inequality of Lemma \ref{lem:interp} to $\varphi$: first, by using the measure preserving property of $\X$ and $\Xe$, we have that
	\begin{equation}\label{est:phi1}
		\|\varphi\|_{L^q}\leq \frac{C}{\delta}\|\b\|_{L^q}.
	\end{equation}
	Second, by Chebishev inequality
	\begin{equation}\label{est:phi2}
		||| \varphi |||_{M^1((0,T)\times(0,T)\times\T^d\times\Omega)}\leq C |||g_1^\gamma|||_{M^1((0,T)\times \T^d)}\leq C \|g_1^\gamma\|_{L^1((0,T)\times \T^d)}.
	\end{equation}
	We apply Lemma \ref{lem:interp} to $\varphi$. The fact that the function $z\in[0,\infty)\mapsto z\left[1+\ln^+\left(\frac{C}{z}\right)\right]\in[0,\infty)$ is non-decreasing (where $\ln^+(w) := \max\{0,\ln(w)\}$ for every $w \ge 0$) and the bounds \eqref{est:phi1} and \eqref{est:phi2} give
	\begin{equation}\label{phi}
		\|\varphi\|_{L^1((0,T)\times(0,T)\times\T^d\times\Omega)}\leq C\frac{q}{q-1} \|g_1^\gamma\|_{L^1}\left[ 1+\ln^+\left(\frac{\|\b\|_{L^q}}{\|g_1^\gamma\|_{L^1}}\frac{T^{1-\frac{1}{q}}}{\delta}\right) \right].
	\end{equation}
	Substituting \eqref{g2} and \eqref{phi} in \eqref{est:q} we finally obtain
	\begin{equation*}
		\int_{\T^d} \E\left[Q^\delta_\e(t,s,x)\right]\de x\leq \frac{\e T}{\delta^2}+2T^{(r-1)/r}C_\gamma+\frac{Cq}{q-1}\gamma\left[ 1+\ln^+\left(\frac{\|\b\|_{L^q}T^{1-\frac{1}{q}}}{\delta\gamma}\right) \right].
	\end{equation*}
	Next, by defining 
	\begin{equation}\label{def:Adelta}
		A_\delta(t,s):=\left\{(x,\omega)\in \T^d\times\Omega:\mathsf{d}(\Xe_{t,s}(x,\omega),\X_{t,s}(x))> \sqrt{\delta}\right\},
	\end{equation}
	we obtain that
	\begin{align}
		\label{est:stab}
		\sup_{t,s\in(0,T)}\left(\mathscr{L}^d\otimes\mathbb{P}\right)\left(A_\delta(t,s)\right)&\leq \frac{C}{|\ln\delta|}\int_{\T^d} \E\left[\ln\left(1+\frac{(\mathsf{d}(\Xe_{t,s}(x),\X_{t,s}(x)))^2}{\delta^2}\right)\right]\de x\\
		\nonumber&\leq \frac{C}{|\ln\delta|}\int_{\T^d} \E\left[Q^\delta_\e(t,s)\right]\de x\\
		\nonumber&\leq C(T,q,r)\left( \frac{\e }{\delta^2|\ln\delta|}+\frac{C_\gamma}{|\ln\delta|}+\frac{1}{|\ln\delta|}\gamma\left[ 1+\ln^+\left(\frac{\|\b\|_{L^q}}{\delta \gamma}\right) \right]\right),
	\end{align}
	where we have used that $\mathsf{d}(x,y)\leq |x-y|$ for any $x,y\in\T^d$. 
	Therefore, 
	\begin{equation} \label{eq:split}
		\begin{split} 
			\int_{\T^d}\mathbb{E}[\mathsf{d}(\Xe_{t,s}(x),\X_{t,s}(x))]\de x\ = &\int_{(\T^d\times\Omega)\setminus A_\delta(t,s)}\mathsf{d}(\Xe_{t,s}(x,\omega),\X_{t,s}(x))\de \mathbb{P}(\omega)\de x\\
			& +  \int_{A_\delta(t,s)}\mathsf{d}(\Xe_{t,s}(x,\omega),\X_{t,s}(x))\de \mathbb{P}(\omega)\de x\\
			\leq &  \sqrt{\delta}+\left(\mathscr{L}^d\otimes\mathbb{P}\right)\left(A_\delta(t,s)\right)
		\end{split}
	\end{equation}
	where we have used that $\mathscr{L}^d \otimes\mathbb{P}$ is a probability measure on $\T^d\times\Omega$ and the distance $\mathsf{d}$ on the torus is bounded. 
	Finally, we choose $\delta=\sqrt{\e}$ and plugging \eqref{est:stab} in \eqref{eq:split}, we get that
	\begin{equation*}
		\label{eq:exp_dist}
		\int_{\T^d}\mathbb{E}[\mathsf{d}(\Xe_{t,s}(x),\X_{t,s}(x))]\de x\leq C(T,q,r)\left(\sqrt[4]{\e}+\frac{C_\gamma}{|\ln\e|}+\frac{1}{|\ln\sqrt{\e}|}\gamma\left[ 1+\ln^+ \left(\frac{\|\b\|_{L^q}}{\sqrt{\e}\gamma}\right) \right]\right),
	\end{equation*}
	and this concludes the proof of the estimate \eqref{est:p=1}.
	
	{ \bf \emph{Step 2. The case $p>1$.}} The proof easily follows from the arguments of Step 1. Since $\nabla\b(t,\cdot)\in L^p(\T^d)$ for a.e. $t\in(0,T)$, we apply Lemma \ref{lem:decomposition-equi} pointwise in time choosing $r=p$, $g_1^\gamma=0$, $\gamma=0$, $g_2^\gamma(t,\cdot)=|\nabla\b(t,\cdot)|$ and $C_\gamma(t)=\|\nabla\b(t,\cdot)\|_{L^p}$. In particular, note that the bound in \eqref{g2} changes into
	\begin{equation*}
		\int_s^t\int_{\T^d}\E\left[g_2^\gamma(\tau,\Xe_{t,\tau}(x))+ g_2^\gamma(\tau,\X_{t,\tau}(x))\right] \de x \de \tau\leq 2\|C_\gamma\|_{L^1}=2\|\nabla\b\|_{L^1L^p},
	\end{equation*}
	and by substituting in \eqref{est:p=1} we get \eqref{est:p>1}.
	
	{\bf \emph{Step 3. Convergence of the flows.}}
	We now prove the convergence of $X^\e_{t,s}$ towards $X_{t,s}$ as $\e\to 0$. If $p>1$ it follows directly by \eqref{est:p>1} by letting $\e\to 0$. Then we analyze the case $(i)$: the strategy is to choose properly the parameter $\gamma$ in \eqref{est:p=1} independently from $\e$. In this regards, note that the last term on the right-hand side of \eqref{est:p=1} is uniformly bounded in $\gamma$ for $\e$ small and converges to $0$ as $\gamma\to 0$. Hence, for any given $\eta>0$ there exists $\gamma_0$ independent from $\e$ such that for all $\gamma\leq \gamma_0$
	\begin{equation*}
		\frac{C(T,q,r)}{|\ln\sqrt{\e}|}\gamma\left[ 1+\ln^+ \left(\frac{\|\b\|_{L^q}}{\sqrt{\e}\gamma}\right)\right]< \frac{\eta}{3}.
	\end{equation*}
	Now that the constant $\gamma$ is fixed, and so is $C_\gamma$, we can infer that there exists $\e_0(M)>0$ such that for all $\e\leq \e_0(\gamma)$
	\begin{equation*}
		C(T,q)\left(\sqrt[4]{\e}+\frac{C_\gamma}{|\ln\e|}\right)<\frac{2}{3}\eta,
	\end{equation*}
	and this concludes the proof of the convergence of the flows.
\end{proof}

The convergence result for $\e\to 0$ to the Lagrangian solution reads as follows:  

\begin{thm}\label{thm:main2} Let $u_0 \in L^1(\T^d)$ be a given initial datum and let $\b$ be a divergence-free vector field such that 
\begin{itemize}
	\item either $\b \in L^1((0,T);W^{1,p}(\T^d))$ for some $p>1$, 
	\item or $\b \in L^1((0,T); W^{1,1}(\T^d))\cap L^q((0,T)\times\T^d)$ for some $q>1$.
\end{itemize}
Let $(v_0^\e)_\e \subset L^\infty(\T^d)$ be \emph{any} sequence of functions such that $v_0^\e \to u_0$ strongly in $L^1(\T^d)$. Then the sequence $(v_\e)_{\e>0} \subset  L^\infty((0,T); L^\infty(\T^d)) \cap L^2((0,T); H^1(\T^d))$ of solutions to \eqref{eq:VV_2} converges in $C([0,T];L^1(\T^d))$ to the (unique) Lagrangian solution to \eqref{eq:transport}. 
\end{thm}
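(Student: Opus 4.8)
The strategy is to transfer the convergence of the stochastic flows $\Xe_{t,0}$ to the deterministic flow $\X_{t,0}$ (established in Lemma~\ref{lem:stability-flows}) into convergence of the corresponding solutions, using the Feynman--Kac representation $v_\e(t,x) = \E[v_0^\e(\Xe_{t,0}(x))]$ together with the explicit formula $u^{\mathsf L}(t,x) = u_0(\X_{t,0}(x))$. The natural first reduction is to replace $v_0^\e$ by $u_0$: by the measure-preserving property of the stochastic flow and Fubini, $\|v_\e(t,\cdot) - \E[u_0(\Xe_{t,0}(\cdot))]\|_{L^1_x} \le \|v_0^\e - u_0\|_{L^1_x} \to 0$ uniformly in $t$, so it suffices to prove $\E[u_0(\Xe_{t,0}(\cdot))] \to u_0(\X_{t,0}(\cdot))$ in $C_t L^1_x$. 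Thus one is reduced to a purely Lagrangian statement about composing a fixed $L^1$ function with flow maps that converge in (measure-preserving) $L^1$.

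Next I would handle the case of a bounded, uniformly continuous initial datum first, where the argument is transparent: if $u_0 \in C(\T^d)$ with modulus of continuity $\omega_{u_0}$, then
\begin{equation*}
	\int_{\T^d} \bigl| \E[u_0(\Xe_{t,0}(x))] - u_0(\X_{t,0}(x)) \bigr| \, \de x
	\le \int_{\T^d} \E\bigl[ \omega_{u_0}\bigl( \mathsf d(\Xe_{t,0}(x),\X_{t,0}(x)) \bigr) \bigr] \, \de x,
\end{equation*}
and since $\omega_{u_0}$ is bounded and $\omega_{u_0}(r) \to 0$ as $r \to 0$, the right-hand side tends to $0$ by Lemma~\ref{lem:stability-flows} and dominated convergence (convergence in $L^1$ of the distance implies, along a subsequence, a.e.\ convergence; a standard subsequence argument upgrades this to convergence of the whole family). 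The bound is uniform in $t \in [0,T]$ because the estimates \eqref{est:p=1}, \eqref{est:p>1} are. For general $u_0 \in L^1$, I would approximate $u_0$ in $L^1(\T^d)$ by continuous functions $u_0^k$, write $u_0 = u_0^k + (u_0 - u_0^k)$, and use the measure-preserving property of both $\Xe_{t,0}$ and $\X_{t,0}$ to control the error terms: $\|\E[(u_0 - u_0^k)(\Xe_{t,0}(\cdot))]\|_{L^1_x} \le \|u_0 - u_0^k\|_{L^1}$ and likewise for $\X$, both uniformly in $t$. Combining the three pieces and a standard $\e$-$k$ diagonal argument gives convergence in $C_t L^1_x$.

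Finally I would identify the limit: the limit $\E[u_0(\Xe_{t,0}(\cdot))] \to u_0(\X_{t,0}(\cdot)) = u^{\mathsf L}(t,\cdot)$ is exactly the Lagrangian solution, whose uniqueness (as the unique solution transported by the regular Lagrangian flow, in the sense discussed after Definition~\ref{def:rlf}) follows from \cite{DPL}. One should also note that the limiting object indeed lies in $C([0,T];L^1(\T^d))$: this follows from the continuity in time of $t \mapsto \X_{t,0}(\cdot)$ in the measure-preserving $L^1$ sense (absolute continuity of trajectories) combined again with the density argument reducing to continuous $u_0$. The main obstacle is the low integrability $u_0 \in L^1$ with no control on $u_0 \b$: one cannot work at the PDE level with distributional solutions, so the entire argument must stay Lagrangian, and the only quantitative input is the flow-stability Lemma~\ref{lem:stability-flows}. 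The delicate point is therefore making the density/approximation argument fully uniform in $\e$ and in $t$, which is exactly where the measure-preserving property of the stochastic flow — available here because $\b$ is divergence-free — is essential.
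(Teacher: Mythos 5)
Your proposal is correct and follows essentially the same route as the paper: represent $v_\e$ via Feynman--Kac, represent $u^{\mathsf L}$ via the regular Lagrangian flow, approximate $u_0$ by regular functions, absorb the errors using the measure-preserving property of both flows, and control the main term by the flow-stability Lemma~\ref{lem:stability-flows}. The only (cosmetic) difference is that the paper approximates $u_0$ by \emph{Lipschitz} functions, which turns the main term directly into $C_n\int_{\T^d}\E[\mathsf d(\Xe_{t,0},\X_{t,0})]\,\de x$ and avoids your dominated-convergence/subsequence step (which, in your modulus-of-continuity version, is anyway easily made quantitative and uniform in $t$ by a Chebyshev splitting of $\{\mathsf d>\delta\}$).
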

\begin{proof}
	First of all, as already observed, from Proposition \ref{prop:parabolic_wp} we deduce that for every fixed $\e>0$ there exists a unique function $v_\e \in L^\infty((0,T); L^\infty(\T^d)) \cap L^2((0,T); H^1(\T^d))$ solving \eqref{eq:VV_2}. Moreover, by the Feynman-Kac formula we know that $v_\e$ satisfies
	$$
	v_\e(t,x)=\mathbb{E}[v_0^\e(\Xe_{t,0}(x))].
	$$
	On the other hand, the Lagrangian solution to \eqref{eq:transport} is given by
	$$
	u^{\mathsf L}(t,x)=u_0(\X_{t,0}(x)).
	$$
	Having both $v_\e$ and $u^{\mathsf L}$ a representation formula in terms of the flow, we use the stability of the flows to prove the convergence in the inviscid limit. We consider a sequence $u_0^n$ of Lipschitz approximations of $u_0$, then for any $t\in(0,T)$ we have that
	\begin{equation*}
		\begin{aligned}
			\|v_\e(t,\cdot)-u^{\mathsf L}(t,\cdot)\|_{L^1}&=\|\mathbb{E}[v_0^\e(\Xe_{t,0})]-u_0(\X_{t,0})\|_{L^1}\\
			&\leq\int_{\T^d}\int_\Omega |v_0^\e(\Xe_{t,0}(x,\omega))-u_0(\Xe_{t,0}(x,\omega))|\de\mathbb{P}(\omega)\de x\\
			&+\int_{\T^d}\int_\Omega |u_0^n(\Xe_{t,0}(x,\omega))-u_0(\Xe_{t,0}(x,\omega))|\de\mathbb{P}(\omega)\de x\\
			&+\int_{\T^d}|u_0^n(\X_{t,0}(x))-u_0(\X_{t,0}(x))|\de x\\
			&+\int_{\T^d}\int_\Omega |u_0^n(\Xe_{t,0}(x,\omega))-u_0^n(\X_{t,0}(x))|\de\mathbb{P}(\omega)\de x.
		\end{aligned}
	\end{equation*}
	In particular, by using that $u_0^n$ is Lipschitz and the measure preserving property of the flows, we get that
	\begin{equation}\label{est:final-main2}
		\|v_\e(t,\cdot)-u^{\mathsf L}(t,\cdot)\|_{L^1}\leq\|v_0^\e-u_0\|_{L^1}+2\|u_0^n-u_0\|_{L^1}+C_n\|\mathbb{E}[\mathsf{d}(\Xe_{t,0},\X_{t,0})]\|_{L^1}. 
	\end{equation}
	We first fix $n$ big enough, independently from $t$ and $\e$, in order to make the second term in \eqref{est:final-main2} as small as we want. Then the conclusion follows from Lemma \ref{lem:stability-flows}.
\end{proof}

	\begin{remark}[No anomalous dissipation]\label{rmk:no_anom_diss}
		If $u_0 \in L^2(\T^d)$,
		Theorem \ref{thm:main2} gives that $(v_\e)_{\e}$ converges in $C([0,T]; L^2(\T^d))$ to the Lagrangian solution of \eqref{eq:transport}. 
		In particular, from the identity 
		\begin{equation*}
			\frac{1}{2}\Vert v_\e(t,\cdot) \Vert_{L^2}^2 + \e \int_0^t \Vert \nabla v_\e(s,\cdot)\Vert_{L^2}^2\de s = 	\frac{1}{2}\Vert v_\e(0,\cdot) \Vert_{L^2}^2
		\end{equation*}
		valid for every $\e>0$ we deduce that 
		\begin{equation}\label{eq:no_an_diss}
			\e \int_0^t \Vert \nabla v_\e(s,\cdot)\Vert_{L^2}^2\de s \to 0, \quad \text{as $\e \to 0$}
		\end{equation}
		for every $t>0$. This means that \emph{no anomalous dissipation is possible} for the vanishing viscosity limit in the case either $\b \in L^1((0,T);W^{1,p}(\T^d))$ for some $p>1$, or $\b \in L^1((0,T);W^{1,1}(\T^d)) \cap L^q((0,T)\times \T^d)$ for some $q>1$ and $u \in L^\infty((0,T); L^2(\T^d))$, even though the solution lacks the integrability required for the DiPerna-Lions' theory to apply. We also remark that, using the results contained in Section \ref{sec:rates}, one can provide a quantitative rate for the convergence \eqref{eq:no_an_diss}.
		
		In a similar spirit, if $u_0 \in L^q(\T^d)$, we obtain that 
		\begin{equation*}
			\Vert u(t,\cdot) \Vert_{L^q(\T^d)} = \Vert u_0 \Vert_{L^q(\T^d)} \qquad \text{for every $t>0$,}
		\end{equation*}
		and more generally all Casimirs of the solution obtained as vanishing viscosity limit are conserved, that is for every $f$ it holds 
		\begin{equation*}
			\int_{\T^d} f(u(t,x))\, \de x = \int_{\T^d} f(u_0(x))\, \de x \qquad \text{for every $t>0$}.
		\end{equation*}
		
		On the other hand, vector fields in the class $L^1((0,T);C^\alpha(\T^d))$, with $d\geq 2$ and $\alpha\in[0,1)$, may exhibit anomalous dissipation as shown in \cite{DEIJ}.
	\end{remark}

	\begin{remark} 
		Comparing the statements of the Selection Theorem given in the Introduction with Theorem \ref{thm:main2}, one can observe that in the latter we need to make, in the case $\b \in L^1((0,T);W^{1,1}(\T^d))$, the additional integrability assumption $\b\in L^q((0,T)\times\T^d)$ for some $q>1$. 
		This is necessary to apply Lemma \ref{lem:stability-flows} which, in turn, yields quantitative convergence rates, as it will be shown in the next section. However, if one dispenses with quantitative estimates, 
		this additional assumption can be dropped and the Selection Theorem holds as stated in the Introduction. We refer the reader to the Appendix \ref{appendix} for the qualitative proof (without the additional technical integrability assumption), which is a minor refinement of the original argument of \cite{DPL}. 
\end{remark}

\section{Rates of convergence}\label{sec:rates} 
The goal of this section is to show that Lagrangian techniques are particularly useful in order to obtain explicit rates of convergence for the vanishing viscosity limit. To find such rates, we need slightly stronger integrability/regularity assumptions on the data. The first result deals with bounded initial data.
\begin{prop}\label{thm:rate}
	Let $\b\in L^1((0,T);W^{1,p}(\T^d))$ be a divergence-free vector field with $p>1$ and $u_0\in L^\infty(\T^d)$ be a given initial datum. Let $(v_0^\e)_\e \subset L^\infty(\T^d)$ be \emph{any} sequence of functions such that $v_0^\e \to u_0$ strongly in $L^1(\T^d)$ and let $v^\e$ and $u$ be the unique solutions of \eqref{eq:VV_2} and \eqref{eq:transport} with initial datum $v_0^\e$ and $u_0$ respectively. Then, there exist $\bar{\e}$ and a continuous function $\phi_{u_0}:\R^+\to\R^+$ with $\phi_{u_0}(0)=0$, such that for any $1\leq q<\infty$
	\begin{equation}\label{rate-modulo}
		\sup_{t\in(0,T)}\|v_\e(t,\cdot)-u(t,\cdot)\|_{L^q}\leq C(T,p,q,\|u_0\|_{L^\infty},\|\nabla\b\|_{L^1L^p})\left( \delta(\e)+\frac{1}{|\ln \delta(\e)|} +\phi_{u_0}(\delta(\e))\right)^{1/q},
	\end{equation} 
	for any $\e\leq \bar{\e}$, where 
	\begin{equation}\label{def:delta}
		\delta(\e):=\max\{\sqrt{\e},\|v_0^\e-u_0 \|_{L^1}\}.
	\end{equation}
\end{prop}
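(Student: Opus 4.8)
The plan is to reproduce the telescoping estimate from the proof of Theorem \ref{thm:main2} but keeping careful track of the quantitative dependence on $\e$ and on the approximation error $\|v_0^\e - u_0\|_{L^1}$, and upgrading from $L^1$ to $L^q$ via the uniform $L^\infty$ bounds guaranteed by Proposition \ref{prop:parabolic_wp}. First I would recall that, by Proposition \ref{prop:parabolic_wp} and the Feynman--Kac formula, $v_\e(t,x) = \E[v_0^\e(\Xe_{t,0}(x))]$ and $\|v_\e\|_{L^\infty_tL^\infty_x}\le \|v_0^\e\|_{L^\infty}$, while $u(t,x)=u_0(\X_{t,0}(x))$ with $\|u\|_{L^\infty_tL^\infty_x}\le\|u_0\|_{L^\infty}$. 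Since for bounded functions the $L^q$ norm is controlled by a power of the $L^1$ norm times a power of the $L^\infty$ norm (interpolation: $\|f\|_{L^q}\le\|f\|_{L^1}^{1/q}\|f\|_{L^\infty}^{1-1/q}$ on a probability space), it suffices to prove the bound for $q=1$ and then raise to the power $1/q$, absorbing $\|u_0\|_{L^\infty}$ into the constant; this explains the exponent $1/q$ and the $L^\infty$-dependence in \eqref{rate-modulo}.

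Next, for $q=1$, I would insert a Lipschitz approximation $u_0^n$ of $u_0$ exactly as in the proof of Theorem \ref{thm:main2}, obtaining
\begin{equation*}
	\|v_\e(t,\cdot)-u(t,\cdot)\|_{L^1}\leq \|v_0^\e-u_0\|_{L^1}+2\|u_0^n-u_0\|_{L^1}+\mathrm{Lip}(u_0^n)\,\|\E[\mathsf d(\Xe_{t,0},\X_{t,0})]\|_{L^1}.
\end{equation*}
Here, rather than sending $n\to\infty$ qualitatively, I would choose $n=n(\e)$ coupled to the flow error. By Lemma \ref{lem:stability-flows}(ii) (the case $p>1$), the flow term is bounded by $C(\sqrt[4]{\e}+\|\nabla\b\|_{L^1L^p}/|\ln\e|)$, which I would rewrite in terms of $\delta(\e)=\max\{\sqrt\e,\|v_0^\e-u_0\|_{L^1}\}$ as $C(\sqrt{\delta(\e)}+1/|\ln\delta(\e)|)$; note $\|v_0^\e-u_0\|_{L^1}\le\delta(\e)$ too, so the first two error sources are also controlled by $\delta(\e)$ up to the modulus term. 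Defining $\phi_{u_0}$ via the modulus of continuity of $u_0$ in $L^1$ together with the blow-up rate of $\mathrm{Lip}(u_0^n)$ (standard: $\mathrm{Lip}(u_0^n)\lesssim n$ and $\|u_0^n-u_0\|_{L^1}\to0$, so one optimizes $n$ against $\delta(\e)$), one packages the $n$-dependent terms into a single continuous function $\phi_{u_0}$ with $\phi_{u_0}(0)=0$ evaluated at $\delta(\e)$. This yields \eqref{rate-modulo} for $q=1$, hence for all $q$ by the interpolation step above, for $\e\le\bar\e$ small enough that $\delta(\e)<1$ so that $|\ln\delta(\e)|$ makes sense and is bounded away from $0$.

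The main obstacle is the bookkeeping of the approximation parameter $n$: one must choose $n=n(\e)$ so that both $\|u_0^n-u_0\|_{L^1}$ and $\mathrm{Lip}(u_0^n)\cdot(\text{flow error})$ are simultaneously small, and then re-express the resulting bound as a single continuous function of $\delta(\e)$ vanishing at $0$ — this is where the non-explicit modulus $\phi_{u_0}$ necessarily enters (for merely $L^\infty$ data no quantitative rate for this term is available, only a modulus). A minor technical point is checking that the interpolation $\|f\|_{L^q}\le\|f\|_{L^1}^{1/q}\|f\|_{L^\infty}^{1-1/q}$ is applied on the probability space $(\T^d,\mathscr L^d)$ to $f=v_\e(t,\cdot)-u(t,\cdot)$, whose $L^\infty$ norm is $\le\|v_0^\e\|_{L^\infty}+\|u_0\|_{L^\infty}$, itself bounded by $C\|u_0\|_{L^\infty}$ for $\e$ small since $v_0^\e\to u_0$ in $L^1$ — one may need to additionally assume, or arrange, that $\|v_0^\e\|_{L^\infty}$ stays bounded, which is harmless in the vanishing-viscosity setup and is implicitly part of choosing a reasonable approximating sequence.
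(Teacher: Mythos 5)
Your proposal is correct and follows the same overall strategy as the paper's proof: the Feynman--Kac representation, the flow-stability Lemma \ref{lem:stability-flows}$(ii)$, a modulus-of-continuity term for $u_0$, and $L^1$--$L^\infty$ interpolation to pass from $q=1$ to general $q$. The one step you handle differently is the term $\iint|u_0(\Xe_{t,0})-u_0(\X_{t,0})|$: you bound it by $\mathrm{Lip}(u_0^n)\int\E[\mathsf d(\Xe_{t,0},\X_{t,0})]$ and optimise $n$ against $\e$, whereas the paper splits on the bad set $A_\delta(t,0)$ from \eqref{def:Adelta}, pays $2\|u_0\|_{L^\infty}\,(\mathscr L^d\otimes\mathbb P)(A_\delta)$ there using the measure estimate \eqref{est:stab} directly (rather than the integrated distance), and invokes the translation modulus $\phi_{u_0}$ on the complement, where the flows are $\sqrt\delta$-close. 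The paper's splitting avoids multiplying a blowing-up Lipschitz constant by a merely logarithmic flow error; on the other hand, your two-scale optimisation is essentially what is needed to fully justify the bound by $\phi_{u_0}$ on the good set (the shift $\Xe_{t,0}-\X_{t,0}$ is not a constant vector, so the modulus of continuity of translations does not apply verbatim and one must insert a mollified/Lipschitz approximant exactly as you do). Either way the outcome is a continuous function of $\delta(\e)$ vanishing at $0$, which is all the statement claims. Finally, your observation that the $q>1$ interpolation requires $\|v_0^\e\|_{L^\infty}\lesssim\|u_0\|_{L^\infty}$ (which does not follow from $L^1$ convergence alone) correctly identifies an implicit assumption in the statement.
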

\begin{proof}
	We show the estimate \eqref{rate-modulo} in the case $q=1$, the general case will follow by a straightforward interpolation of the spaces $L^1(\T^d)$ and $L^\infty(\T^d)$. Since $u_0\in L^\infty(\T^d)\subset L^1(\T^d)$, using the continuity of translations in $L^1$ we can infer that there exists a continuous function $\phi_{u_0}$ as in the statement of the theorem and $h_0>0$ such that
	\begin{equation*}
		\|u_0(\cdot+h)-u_0\|_{L^1}\leq \phi_{u_0}(h),\hspace{0.5cm}\mbox{for all }h\leq h_0.
	\end{equation*}
	Then, for any $\delta\leq h_0$, we can compute
	\begin{equation*}
		\begin{aligned}
			\|v_\e(t,\cdot)-u(t,\cdot)\|_{L^1}&=\|\mathbb{E}[v_0^\e(\Xe_{t,0})]-u_0(\X_{t,0})\|_{L^1}\\
			&\leq \|\mathbb{E}[v_0^\e(\Xe_{t,0})]-u_0(\Xe_{t,0})\|_{L^1}\\
			&+\iint_{A_\delta(t,0)} |u_0(\Xe_{t,0}(x,\omega))-u_0(\X_{t,0}(x))|\de\mathbb{P}(\omega)\de x\\
			&+\iint_{(\T^d\times\Omega)\setminus A_\delta(t,0)} |v_0^\e(\Xe_{t,0}(x,\omega))-u_0(\X_{t,0}(x))|\de\mathbb{P}(\omega)\de x\\
			&\leq \|v_0^\e-u_0 \|_{L^1}+2\|u_0\|_{L^\infty}\mathscr{L}^d\otimes\mathbb{P}(A_\delta(t,0))+\phi_{u_0}(\delta)\\
			&\leq \|v_0^\e-u_0 \|_{L^1}+2C(T,p)\|u_0\|_{L^\infty} \left( \frac{\e}{\delta^2|\ln\delta|}+\frac{\|\nabla \b\|_{L^1L^p}}{|\ln\delta|}\right)+\phi_{u_0}(\delta),
		\end{aligned}
	\end{equation*}
	where the set $A_\delta$ is defined as in \eqref{def:Adelta} and in the last line we have used the estimate in Lemma \ref{lem:stability-flows}. The proof follows by choosing $\delta(\e)$ as in \eqref{def:delta} and $\delta(\bar{\e})=h_0$.
\end{proof}
It is clear that the rate provided by Proposition \ref{thm:rate} is not completely explicit for two reasons: on the one hand, the convergence of the initial datum depends upon the choice of the approximation $v^\e_0$; on the other hand, the function $\phi_{u_0}$ is implicitly related to the regularity of the initial datum. For the former issue, since we deal with bounded initial datum, existence and uniqueness of solutions of \eqref{eq:VV_2} and \eqref{eq:transport} are guaranteed by \cite[Proposition 5.3]{LBL} and \cite{DPL}, thus we do not need the approximating sequence $v_0^\e$. Concerning the latter issue, the function $\phi_{u_0}$ can be explicitly constructed once the regularity of $u_0$ is known. Motivated by the results in \cite{BN}, we provide the following example.
\begin{cor}\label{cor:rate}
	Let $u_0\in H^1(\T^d)$. Assume that the hypothesis of Theorem \ref{thm:rate} hold with $v_0^\e=u_0$. Then,
	\begin{equation}\label{rate_bn}
		\sup_{t\in(0,T)}\|v_\e(t,\cdot)-u(t,\cdot)\|_{L^2}\leq \frac{C}{\sqrt{|\ln\e|}},
	\end{equation}
	where the constant $C>0$ depends on $T,p,\|u_0\|_{L^\infty},\|\nabla u_0\|_{L^2},\|\nabla \b\|_{L^1L^p}$.
\end{cor}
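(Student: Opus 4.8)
The plan is to deduce the corollary directly from Proposition~\ref{thm:rate} by making the modulus of continuity $\phi_{u_0}$ explicit for $H^1$ data. Since here $v_0^\e=u_0$, we have $\|v_0^\e-u_0\|_{L^1}=0$, hence $\delta(\e)=\sqrt\e$ in \eqref{def:delta}. Applying \eqref{rate-modulo} with $q=2$ then gives, for every $\e\le\bar\e$,
\begin{equation*}
	\sup_{t\in(0,T)}\|v_\e(t,\cdot)-u(t,\cdot)\|_{L^2}\le C\left(\sqrt\e+\frac{1}{|\ln\sqrt\e|}+\phi_{u_0}(\sqrt\e)\right)^{1/2},
\end{equation*}
with $C$ depending on $T,p,\|u_0\|_{L^\infty},\|\nabla\b\|_{L^1L^p}$, so it remains only to control $\phi_{u_0}(\sqrt\e)$ and to identify the dominant contribution.

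First I would quantify the $L^1$-modulus of continuity of translations for $u_0\in H^1(\T^d)$. The standard estimate $\|u_0(\cdot+h)-u_0\|_{L^1}\le |h|\,\|\nabla u_0\|_{L^1}$ (obtained for smooth $u_0$ from $u_0(x+h)-u_0(x)=\int_0^1\nabla u_0(x+\tau h)\cdot h\,\de\tau$ and extended by density, $h$ being identified with its minimal representative of length $\le \sqrt d/2$) combined with the inclusion $L^2(\T^d)\subset L^1(\T^d)$ (valid since $\T^d$ has unit measure) yields
\begin{equation*}
	\|u_0(\cdot+h)-u_0\|_{L^1}\le |h|\,\|\nabla u_0\|_{L^2}.
\end{equation*}
Hence in the proof of Proposition~\ref{thm:rate} one may take the linear function $\phi_{u_0}(h):=\|\nabla u_0\|_{L^2}\,h$, with $h_0$ (and therefore $\bar\e$) a fixed constant depending only on $d$.

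Substituting $\phi_{u_0}(\sqrt\e)=\|\nabla u_0\|_{L^2}\sqrt\e$ and using $|\ln\sqrt\e|=\tfrac12|\ln\e|$ gives
\begin{equation*}
	\sup_{t\in(0,T)}\|v_\e(t,\cdot)-u(t,\cdot)\|_{L^2}\le C\left(\bigl(1+\|\nabla u_0\|_{L^2}\bigr)\sqrt\e+\frac{2}{|\ln\e|}\right)^{1/2}.
\end{equation*}
Finally I would observe that $\sqrt\e\le 1/|\ln\e|$ for all $\e$ small enough, so that for $\e$ small (depending on $\|\nabla u_0\|_{L^2}$) the bracket is bounded by $C'/|\ln\e|$, with $C'$ depending on $T,p,\|u_0\|_{L^\infty},\|\nabla u_0\|_{L^2},\|\nabla\b\|_{L^1L^p}$; taking the square root yields \eqref{rate_bn}. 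The only point that requires a little care, rather than a genuine obstacle, is this last bookkeeping: verifying that the algebraic terms $\sqrt\e$ and $\|\nabla u_0\|_{L^2}\sqrt\e$ are negligible compared with the logarithmic term $1/|\ln\e|$ as $\e\downarrow 0$, so that the convergence rate is governed entirely by the $1/|\ln\delta(\e)|$ contribution appearing in Proposition~\ref{thm:rate}.
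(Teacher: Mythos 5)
Your proposal is correct and follows essentially the same route as the paper: identify $\phi_{u_0}(h)=h\|\nabla u_0\|_{L^2}$ from the translation estimate for $H^1$ data, take $\delta(\e)=\sqrt{\e}$ since $v_0^\e=u_0$, and apply Proposition \ref{thm:rate} with $q=2$, noting that the logarithmic term dominates. Your extra care in passing through the $L^1$-modulus (via $L^2(\T^d)\subset L^1(\T^d)$) and in checking that $\sqrt{\e}\le 1/|\ln\e|$ for small $\e$ only makes explicit what the paper leaves implicit.
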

\begin{proof}
	It is enough to compute the function $\phi_{u_0}$. We have that
	\begin{align*}
		\|u_0(\cdot+h)-u_0\|_{L^2}\leq h \|\nabla u_0\|_{L^2},
	\end{align*}
	and then we conclude by applying Proposition \ref{thm:rate} with $\delta=\sqrt{\e}$ and $\phi_{u_0}(\delta)=\delta \|\nabla u_0\|_{L^2}$.
\end{proof}
It is interesting to compare the rate given by Corollary \ref{cor:rate} and the one in \cite[Theorem 3.3]{BN}. Under the same assumption on the initial datum, Corollary \ref{cor:rate} provides a rate of convergence for a more general class of vector fields, namely $\b\in L^1((0,T);W^{1,p}(\T^d))$ with $p>1$ instead of $\b\in L^\infty((0,T);W^{1,p}(\T^d))$ with $p>2$. On the other hand, we do not improve completely the rate in \cite{BN}: the rate in \eqref{rate_bn} is better if $2\leq p\leq 3$, while it is worst for $p>3$. We also observe that a key tool in \cite{BN} is a propagation-of-regularity result, which is not needed in our argument.\\

We finally show how with these techniques it is possible to give a quantitative stability estimate for advection-diffusion equations. We address this issue motivated by the recent results in \cite{S21}:
\begin{lem}\label{lem:stability-stoc-flows}
	Let $\b\in L^1((0,T);W^{1,p}(\T^d))$ be a divergence-free vector field, where $p>1$. Let $\X^{\e_1}_{t,s},\X^{\e_2}_{t,s}$ be the stochastic flows of $\b$ associated respectively to $\e_1,\e_2>0$. Then,
	\begin{equation}
		\int_{\T^d}\mathbb{E}[\mathsf{d}(\X^{\e_1}_{t,s}(x),\X^{\e_2}_{t,s}(x))]\de x\leq C(T,p)\left( \sqrt[4]{|\e_1-\e_2|}+\frac{\|\nabla \b\|_{L^1L^p}}{|\ln|\e_1-\e_2||} \right).
	\end{equation}
\end{lem}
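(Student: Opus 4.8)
The plan is to adapt, essentially verbatim, the argument of Lemma~\ref{lem:stability-flows}$(ii)$, replacing the pair ``stochastic flow / regular Lagrangian flow'' by the pair ``stochastic flow $\X^{\e_1}$ / stochastic flow $\X^{\e_2}$''. The one structural point to fix beforehand is a coupling: by the strong existence and pathwise uniqueness for \eqref{eq:sde} recalled after Definition~\ref{def:sf} (see \cite{CJ}), we may realise \emph{both} $\X^{\e_1}$ and $\X^{\e_2}$ on the \emph{same} filtered probability space and, crucially, driven by the \emph{same} backward Brownian motion $\bm W$. With this choice, for a.e.\ $x\in\T^d$, a.e.\ $\omega\in\Omega$ and $s\in[0,t)$ the difference $Y_s:=\X^{\e_1}_{t,s}(x,\omega)-\X^{\e_2}_{t,s}(x,\omega)$ solves
\[
\de Y_s=\bigl(\b(s,\X^{\e_1}_{t,s}(x,\omega))-\b(s,\X^{\e_2}_{t,s}(x,\omega))\bigr)\,\de s+(\sqrt{2\e_1}-\sqrt{2\e_2})\,\de\bm W_s,\qquad Y_t=0,
\]
so that the only change with respect to \eqref{eq:difference} is that the diffusion coefficient $\sqrt{2\e}$ is replaced by the small constant $\sqrt{2\e_1}-\sqrt{2\e_2}$.

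From here I would run the logarithmic functional machinery of the proof of Lemma~\ref{lem:stability-flows}: apply It\^o's formula to $Q^\delta(t,s,x,\omega):=q_\delta(Y_s)=\ln\bigl(1+|Y_s|^2/\delta^2\bigr)$, use the bounds $|\nabla q_\delta|\le C/(\delta+|y|)$ and $|\nabla^2 q_\delta|\le C/(\delta^2+|y|^2)$, and observe that the second-order It\^o term now produces the factor $(\sqrt{\e_1}-\sqrt{\e_2})^2$ in place of $\e$. The only genuinely new (and elementary) ingredient is the inequality $|\sqrt{\e_1}-\sqrt{\e_2}|\le\sqrt{|\e_1-\e_2|}$, valid for $\e_1,\e_2\ge0$, which turns that factor into $|\e_1-\e_2|$ and yields the analogue of \eqref{est:q}, namely
\[
\int_{\T^d}\E\bigl[Q^\delta(t,s,x)\bigr]\,\de x\le\frac{C\,|\e_1-\e_2|\,(t-s)}{\delta^2}+C\int_s^t\int_{\T^d}\E\!\left[\frac{\bigl|\b(\tau,\X^{\e_1}_{t,\tau}(x))-\b(\tau,\X^{\e_2}_{t,\tau}(x))\bigr|}{\delta+\bigl|\X^{\e_1}_{t,\tau}(x)-\X^{\e_2}_{t,\tau}(x)\bigr|}\right]\de x\,\de\tau.
\]

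For the drift term, since $p>1$ I would proceed exactly as in Step~2 of Lemma~\ref{lem:stability-flows}: dominate the integrand by $C\bigl(MD\b(\tau,\X^{\e_1}_{t,\tau})+MD\b(\tau,\X^{\e_2}_{t,\tau})\bigr)$ via Lemma~\ref{lem:diff-quotients} together with $\mathsf{d}\le|\cdot|$, then use the measure-preserving property of \emph{both} stochastic flows and the strong maximal estimate of Lemma~\ref{lem:maximal-function} (this is where $p>1$ enters) to bound the double integral by $C(d,p)\,\|\nabla\b\|_{L^1L^p}$. Setting $A_\delta(t,s):=\{(x,\omega):\mathsf{d}(\X^{\e_1}_{t,s}(x,\omega),\X^{\e_2}_{t,s}(x,\omega))>\sqrt\delta\}$, Chebyshev's inequality gives $(\mathscr{L}^d\otimes\mathbb{P})(A_\delta(t,s))\le\tfrac{C}{|\ln\delta|}\bigl(\tfrac{|\e_1-\e_2|}{\delta^2}+\|\nabla\b\|_{L^1L^p}\bigr)$, and splitting $\int_{\T^d}\E[\mathsf{d}(\X^{\e_1}_{t,s},\X^{\e_2}_{t,s})]\,\de x$ over $A_\delta(t,s)$ and its complement as in \eqref{eq:split} (using that $\mathsf{d}$ is bounded and $\mathscr{L}^d\otimes\mathbb{P}$ is a probability measure) bounds it by $\sqrt\delta+(\mathscr{L}^d\otimes\mathbb{P})(A_\delta(t,s))$. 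Choosing $\delta=\sqrt{|\e_1-\e_2|}$ collects the contributions into $C(T,p)\bigl(\sqrt[4]{|\e_1-\e_2|}+\|\nabla\b\|_{L^1L^p}/|\ln|\e_1-\e_2||\bigr)$, which is the claim.

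I do not expect a substantial obstacle, since all the hard analysis is already contained in Lemma~\ref{lem:stability-flows}. The single point requiring care is the coupling: one has to drive $\X^{\e_1}$ and $\X^{\e_2}$ by the same Brownian motion — legitimate precisely because \cite{CJ} provides strong (not merely weak) solutions — since otherwise $Y_s$ would not satisfy a closed SDE and the method would not even get started. A secondary, purely bookkeeping matter is to justify via the usual localisation that the stochastic-integral term has zero expectation after applying It\^o's formula, which holds here for the same reason as in the original proof, $\nabla q_\delta$ being bounded.
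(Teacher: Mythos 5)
Your proposal is correct and follows essentially the same route as the paper: the paper's own (sketched) proof writes the SDE for $\X^{\e_1}_{t,s}-\X^{\e_2}_{t,s}$ with diffusion coefficient $\sqrt{2\e_1}-\sqrt{2\e_2}$, applies It\^o's formula to the same logarithmic functional to get the bound $\tfrac{|\e_1-\e_2|(t-s)}{\delta^2}$ for the stochastic contribution, and then repeats Step~2 of Lemma~\ref{lem:stability-flows} with $\delta=\sqrt{|\e_1-\e_2|}$. Your explicit remarks on coupling the two flows through the same Brownian motion (via strong existence from \cite{CJ}) and on the elementary inequality $|\sqrt{\e_1}-\sqrt{\e_2}|\le\sqrt{|\e_1-\e_2|}$ are exactly the points the paper leaves implicit, and they are handled correctly.
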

\begin{proof}
	We just sketch the proof since it follows the same computations of Step 2 in Lemma \ref{lem:stability-flows}. Notice that the S.D.E. solved by the difference $\X^{\e_1}_{t,s}-\X^{\e_2}_{t,s}$ is
	\begin{equation*}
		\begin{cases}
			\de(\X^{\e_1}_{t,s}(x,\omega)-\X^{\e_2}_{t,s}(x,\omega))=(\b(s, \X^{\e_1}_{t,s}(x,\omega))-\b(s,\X^{\e_2}_{t,s}(x,\omega)))\,\de s+(\sqrt{2\e_1}-\sqrt{2\e_2})\de \bm W_{s}(\omega),\\
			\X^{\e_1}_{t,t}(x,\omega)-\X^{\e_2}_{t,t}(x,\omega)=0.
		\end{cases}
	\end{equation*}
	Then, by defining the function $q_{\delta}(y)=\ln\left(1+\frac{|y|^2}{\delta^2}\right)$ and the related $Q^\delta_{\e_1,\e_2}$ as
	\begin{equation*}
		Q^\delta_{\e_1,\e_2}(t,s,x,\omega):=q_\delta(\X^{\e_1}_{t,s}(x,\omega)-\X^{\e_2}_{t,s}(x,\omega))=\ln\left(1+\frac{|\X^{\e_1}_{t,s}(x,\omega)-\X^{\e_2}_{t,s}(x,\omega)|^2}{\delta^2}\right),
	\end{equation*}
	when we apply It\^{o}'s formula the contribution of the stochastic part is different, namely
	\begin{equation*}
		\int_{\T^d} \E\left[Q^\delta_{\e_1,\e_2}(t,s,x)\right]\de x\leq \frac{|\e_1-\e_2| (t-s)}{\delta^2}+C\int_s^t\int_{\T^d} \E\left[\frac{\left| \b(\tau,\X^{\e_1}_{t,\tau}(x))-\b(\tau,\X^{\e_2}_{t,\tau}(x)) \right|}{\delta+\left| \X^{\e_1}_{t,\tau}(x)-\X^{\e_2}_{t,\tau}(x)\right|}\right]\de x\de \tau.
	\end{equation*}
	The conclusion follows by defining the set $A_\delta$ as
	\begin{equation*}
		A_\delta(t,s):=\left\{(x,\omega)\in \T^d\times\Omega:\mathsf{d}(\X^{\e_1}_{t,s}(x,\omega),\X^{\e_2}_{t,s}(x,\omega))> \sqrt{\delta}\right\},
	\end{equation*}
	and doing the same computations as in Step 2 of Lemma \ref{lem:stability-flows}.
\end{proof}

Then, the estimate on the flows yields the following rate of convergence for solutions of \eqref{eq:VV_2}.
\begin{prop}\label{prop:rate-ad}
	Let $\b\in L^1((0,T);W^{1,p}(\T^d))$ be a divergence-free vector field with $p>1$ and $u_0\in L^\infty(\T^d)$. Let $\e_n$ be a sequence such that $\e_n\to\e>0$ and let $v_{\e_n},v_\e$ the unique solutions of \eqref{eq:VV_2} with initial datum $u_0$ and viscosity $\e_n,\e$ , respectively. Then, there exist $N(u_0,T)$ and a continuous function $\phi_{u_0}:\R^+\to\R^+$ with $\phi_{u_0}(0)=0$, such that 
	\begin{equation}\label{def:rate-stab-ad}
		\sup_{t\in(0,T)}\|v_{\e_n}(t,\cdot)-v_\e(t,\cdot)\|_{L^1}\leq C\left(\frac{1}{|\ln |\e_n-\e||} +\phi_{u_0}\left(\sqrt{|\e_n-\e|}\right)\right),
	\end{equation}
	for any $n\geq N(u_0,T)$, where the constant $C$ depends upon $T,p,\|u_0\|_{L^\infty},$ and $\|\nabla\b\|_{L^1L^p}$.
\end{prop}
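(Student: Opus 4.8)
The plan is to mimic the proof of Proposition~\ref{thm:rate}, with the stochastic-versus-deterministic flow comparison of Lemma~\ref{lem:stability-flows} replaced by the stochastic-versus-stochastic comparison of Lemma~\ref{lem:stability-stoc-flows}; more precisely, I would reuse the intermediate bound on $(\mathscr L^d\otimes\mathbb P)(A_\delta)$ that underlies that lemma. First, realize the two viscous solutions on a common stochastic basis: by strong existence and pathwise uniqueness for \eqref{eq:sde}, the flows $\X^{\e_n}$ and $\X^{\e}$ of $\b$ can be built on the same $(\Omega,(\mathcal F_t)_{t\ge0},\mathbb P)$ with the same backward Brownian motion $\bm W$, and the Feynman--Kac formula yields $v_{\e_n}(t,x)=\E[u_0(\X^{\e_n}_{t,0}(x))]$ and $v_{\e}(t,x)=\E[u_0(\X^{\e}_{t,0}(x))]$.

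Next, using that $u_0\in L^\infty(\T^d)\subset L^1(\T^d)$, pick a Lipschitz approximation $\{u_0^m\}_m$ of $u_0$ in $L^1$ and let $\phi_{u_0}$ be the induced (concave, continuous, vanishing at $0$) modulus $\phi_{u_0}(s):=\inf_m\bigl(2\|u_0-u_0^m\|_{L^1}+\mathrm{Lip}(u_0^m)\sqrt s\bigr)$. Put $\delta:=\sqrt{|\e_n-\e|}$ and introduce the bad set
\begin{equation*}
	A_\delta(t,0):=\bigl\{(x,\omega)\in\T^d\times\Omega:\mathsf d\bigl(\X^{\e_n}_{t,0}(x,\omega),\X^{\e}_{t,0}(x,\omega)\bigr)>\sqrt\delta\bigr\}.
\end{equation*}
By Jensen's inequality and the triangle inequality,
\begin{equation*}
	\|v_{\e_n}(t,\cdot)-v_{\e}(t,\cdot)\|_{L^1}\le\iint_{A_\delta(t,0)}\bigl|u_0(\X^{\e_n}_{t,0})-u_0(\X^{\e}_{t,0})\bigr|+\iint_{(\T^d\times\Omega)\setminus A_\delta(t,0)}\bigl|u_0(\X^{\e_n}_{t,0})-u_0(\X^{\e}_{t,0})\bigr|.
\end{equation*}
On $A_\delta(t,0)$ I bound the integrand by $2\|u_0\|_{L^\infty}$ and estimate the measure of $A_\delta(t,0)$ exactly as in Step~2 of Lemma~\ref{lem:stability-flows} applied to the SDE solved by $\X^{\e_n}_{t,s}-\X^{\e}_{t,s}$ (It\^o's formula for $q_\delta$, Lemma~\ref{lem:diff-quotients}, and the strong maximal bound of Lemma~\ref{lem:maximal-function}, legitimate since $p>1$), obtaining $(\mathscr L^d\otimes\mathbb P)(A_\delta(t,0))\le \frac{C}{|\ln\delta|}\bigl(\frac{|\e_n-\e|T}{\delta^2}+C\|\nabla\b\|_{L^1L^p}\bigr)$. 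Off $A_\delta(t,0)$ the displacement is $\le\sqrt\delta$, so splitting $u_0=u_0^m+(u_0-u_0^m)$, estimating $|u_0^m(\X^{\e_n}_{t,0})-u_0^m(\X^{\e}_{t,0})|\le\mathrm{Lip}(u_0^m)\sqrt\delta$ and pushing the $L^1$ remainder through the two measure-preserving flows bounds this term by $\phi_{u_0}(\delta)=\phi_{u_0}(\sqrt{|\e_n-\e|})$.

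Finally, substituting $\delta=\sqrt{|\e_n-\e|}$ turns $\frac{|\e_n-\e|T}{\delta^2}$ into the constant $T$ and $|\ln\delta|$ into $\tfrac12|\ln|\e_n-\e||$, so collecting the two contributions gives
\begin{equation*}
	\sup_{t\in(0,T)}\|v_{\e_n}(t,\cdot)-v_{\e}(t,\cdot)\|_{L^1}\le C\Bigl(\frac{1}{|\ln|\e_n-\e||}+\phi_{u_0}\bigl(\sqrt{|\e_n-\e|}\bigr)\Bigr),
\end{equation*}
with $C=C(T,p,\|u_0\|_{L^\infty},\|\nabla\b\|_{L^1L^p})$, uniformly in $t$; the condition $|\e_n-\e|<1$ (needed so the logarithms are genuine and the estimates monotone) determines the threshold $N(u_0,T)$. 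The step I expect to be the main obstacle is the control of the off-bad-set term: one cannot invoke the $L^1$ translation modulus directly because $\X^{\e_n}_{t,0}$ and $\X^{\e}_{t,0}$ are intricate functions of $(x,\omega)$ rather than a fixed shift, so the estimate has to be routed through the Lipschitz approximation and relies crucially on the fact that \emph{both} flows preserve $\mathscr L^d$; this is also where boundedness of $u_0$ is essential (it is what makes the bad-set contribution harmless), which is why the statement requires $u_0\in L^\infty$.
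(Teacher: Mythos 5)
Your proposal is correct and follows essentially the same route as the paper, whose proof of Proposition \ref{prop:rate-ad} is literally the one-line instruction to repeat the argument of Proposition \ref{thm:rate} with Lemma \ref{lem:stability-flows} replaced by Lemma \ref{lem:stability-stoc-flows}. You have in fact supplied more detail than the paper does, and your handling of the off-bad-set term via Lipschitz approximations and measure preservation (rather than a direct appeal to the $L^1$ translation modulus, which the paper glosses over) is the right way to make that step rigorous.
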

\begin{proof}
	The proof follows by arguing exactly as in the one of Proposition \ref{prop:rate-ad} and using Lemma \ref{lem:stability-stoc-flows}.
\end{proof}

One can compare the rate given by Proposition \ref{prop:rate-ad} with the ones in \cite{LiLuo} and \cite{S21}. The rate in \eqref{def:rate-stab-ad} depends upon $\phi_{u_0}$ and cannot be better than $O\left( \frac{1}{|\ln|\e_n-\e||} \right)$, but provides convergence in the {\em strong} norm $C([0,T];L^1(\T^d))$. On the other hand, the rates of \cite{LiLuo} and \cite{S21} are of order $\sqrt{|\e_n-\e|}$ and $|\e_n-\e|$, respectively, but they are given for a logarithmic distance which instead metrizes {\em weak} convergence.

\appendix

\section{An Eulerian approach to the vanishing viscosity scheme}\label{appendix}

The goal of this appendix is to give a detailed proof of the following result, which is refinement of \cite[Theorem IV.1]{DPL}:

\begin{thm}\label{thm:main} Let $\b \in L^1((0,T); W^{1,1}(\T^d))$ be a divergence-free vector field and let $u_0 \in L^1(\T^d)$ be a given initial datum. Let $(v_0^\e)_\e \subset L^\infty(\T^d)$ be \emph{any} sequence of functions such that $v_0^\e \to u_0$ strongly in $L^1(\T^d)$. Then the sequence $(v_\e)_{\e>0} \subseteq L^\infty((0,T); L^\infty(\T^d)) \cap L^2((0,T); H^1(\T^d))$ of solutions to \eqref{eq:VV_2} converges in $C([0,T];L^1(\T^d))$ to 
	the (unique) Lagrangian solution to \eqref{eq:transport}.  
\end{thm}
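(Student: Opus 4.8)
The plan is to follow the Eulerian/duality strategy of DiPerna--Lions \cite{DPL}. The only genuine difficulty is that for a datum merely in $L^1$ the product $v_\e\b$ need not be integrable, so one cannot pass to the limit directly in the weak formulation of \eqref{eq:VV_2}. I would first \emph{reduce to the case of a bounded datum}, for which the vanishing-viscosity family is uniformly bounded in $L^\infty$ and this obstruction disappears. Fix $\eta>0$, put $g:=u_0\,\chi_{\{|u_0|\le M\}}\in L^\infty(\T^d)$ with $M=M(\eta)$ so large that $\|u_0-g\|_{L^1}<\eta$, let $\tilde v_\e$ be the solution of \eqref{eq:VV_2} with datum $g$ (Proposition \ref{prop:parabolic_wp}), and let $g^{\sf L}(t,x):=g(\X_{t,0}(x))$ be the corresponding Lagrangian solution. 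Since \eqref{eq:VV_2} is linear, $v_\e-\tilde v_\e$ solves it with datum $v_0^\e-g\in L^\infty$, so by the $L^1$-bound in \eqref{eq:enunciato_stima_for_all_s} one has $\sup_{t}\|v_\e(t,\cdot)-\tilde v_\e(t,\cdot)\|_{L^1}\le\|v_0^\e-u_0\|_{L^1}+\eta$, while $\sup_t\|u^{\sf L}(t,\cdot)-g^{\sf L}(t,\cdot)\|_{L^1}=\|u_0-g\|_{L^1}<\eta$ because $\X_{t,0}$ is measure-preserving. Thus it suffices to prove $\sup_t\|\tilde v_\e(t,\cdot)-g^{\sf L}(t,\cdot)\|_{L^1}\to0$, and then let $\e\to0$ and $\eta\to0$.

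\emph{Compactness for bounded data.} By Proposition \ref{prop:parabolic_wp}, $\|\tilde v_\e\|_{L^\infty((0,T)\times\T^d)}\le\|g\|_{L^\infty}$, $\sup_t\|\tilde v_\e(t,\cdot)\|_{L^2}\le\|g\|_{L^2}$, and the energy identity $\tfrac12\|\tilde v_\e(t,\cdot)\|_{L^2}^2+\e\int_0^t\|\nabla\tilde v_\e(s,\cdot)\|_{L^2}^2\,\de s=\tfrac12\|g\|_{L^2}^2$ holds. Testing \eqref{eq:VV_2} against $\varphi\in C^\infty(\T^d)$ and using $\dive\b=0$,
\[
\Big|\frac{\de}{\de t}\int_{\T^d}\tilde v_\e\,\varphi\,\de x\Big|\le\|g\|_{L^\infty}\big(\|\nabla\varphi\|_{L^\infty}\,\|\b(t,\cdot)\|_{L^1}+\e\,\|\Delta\varphi\|_{L^\infty}\big),
\]
and the right-hand side is integrable in $t$, uniformly for $\e\in(0,1]$; \emph{this is precisely where the uniform $L^\infty$-bound is used}, and it is what fails for general $L^1$ data. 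Together with the $L^\infty_tL^2_x$-bound and metrizability of bounded sets of $L^2$ in the weak topology, Arzel\`a--Ascoli gives, along a subsequence, $\tilde v_\e\to\tilde v$ in $C([0,T];L^2(\T^d)\text{-weak})$, hence $\tilde v_\e\weaktos\tilde v$ in $L^\infty((0,T)\times\T^d)$.

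\emph{Identification of the limit and strong convergence.} Since $\tilde v_\e\b$ is now bounded in $L^1$, $\b\cdot\nabla\varphi\in L^1((0,T)\times\T^d)$ and $\tilde v_\e\weaktos\tilde v$ in $L^\infty$, one may pass to the limit in the weak formulation of \eqref{eq:VV_2} (the viscous contribution is $O(\e\|g\|_{L^\infty}\|\Delta\varphi\|_{L^1})\to0$), obtaining that $\tilde v$ is a \emph{bounded} distributional solution of \eqref{eq:transport} with datum $g$; by DiPerna--Lions uniqueness \cite{DPL} (valid for divergence-free $\b\in L^1_tW^{1,1}_x$ and solutions in $L^\infty$) it equals $g^{\sf L}$, so the \emph{whole} family converges. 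To upgrade to strong convergence, $\dive\b=0$ gives $\|g^{\sf L}(t,\cdot)\|_{L^2}=\|g\|_{L^2}$ for all $t$; hence $\|\tilde v_\e(T,\cdot)\|_{L^2}\le\|g\|_{L^2}$ and weak lower semicontinuity force $\|\tilde v_\e(T,\cdot)\|_{L^2}\to\|g\|_{L^2}$, so $\e\int_0^T\|\nabla\tilde v_\e(s,\cdot)\|_{L^2}^2\,\de s\to0$ by the energy identity. Then
\[
\|\tilde v_\e(t,\cdot)-g^{\sf L}(t,\cdot)\|_{L^2}^2=\|\tilde v_\e(t,\cdot)\|_{L^2}^2+\|g\|_{L^2}^2-2\big\langle\tilde v_\e(t,\cdot),g^{\sf L}(t,\cdot)\big\rangle\longrightarrow 0\qquad\text{as }\e\to0,
\]
uniformly in $t$, because $\|\tilde v_\e(t,\cdot)\|_{L^2}^2=\|g\|_{L^2}^2-2\e\int_0^t\|\nabla\tilde v_\e\|_{L^2}^2\to\|g\|_{L^2}^2$ uniformly and $\langle\tilde v_\e(t,\cdot),g^{\sf L}(t,\cdot)\rangle\to\|g^{\sf L}(t,\cdot)\|_{L^2}^2=\|g\|_{L^2}^2$ uniformly (from the $C([0,T];L^2\text{-weak})$-convergence and the continuity of $t\mapsto g^{\sf L}(t,\cdot)$ in $L^2$). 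Since $\mathscr L^d(\T^d)=1$, this gives $\sup_t\|\tilde v_\e(t,\cdot)-g^{\sf L}(t,\cdot)\|_{L^1}\to0$, and the first paragraph concludes.

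\emph{Main obstacle.} The crux is precisely the passage to the limit in the flux, genuinely unavailable at the $L^1$ level — this is the whole point of the ``beyond distributional regime''; the reduction to bounded data makes it feasible, at the price of invoking DiPerna--Lions' renormalization/uniqueness for bounded solutions. An alternative closer to \cite{DPL}'s original argument is not to truncate but to test $v_\e$ against the solution $w_\e$ of the backward adjoint problem $-\partial_t w_\e-\b\cdot\nabla w_\e=\e\Delta w_\e$ with smooth terminal datum $\psi$ (for which $\|w_\e\|_{L^\infty}\le\|\psi\|_{L^\infty}$ holds automatically, and $\tfrac{\de}{\de t}\int_{\T^d}v_\e w_\e\,\de x=0$ follows once the viscous terms cancel and $\dive\b=0$ is used), yielding $\int_{\T^d}v_\e(t,\cdot)\psi\,\de x=\int_{\T^d}v_0^\e\,w_\e(0,\cdot)\,\de x$. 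One then proves $w_\e(0,\cdot)\to\psi(\X_{0,t}(\cdot))$ (in $L^r$ for every $r<\infty$) as in the bounded-data analysis, and upgrades the resulting weak-$L^1$ convergence of $v_\e$ to strong convergence via a renormalization argument controlled by the entropy dissipation $\e\int\Psi''(v_\e)|\nabla v_\e|^2$, with $\Psi$ the convex superlinear function furnished by the equi-integrability of $\{v_0^\e\}$ (de la Vall\'ee-Poussin, Theorem \ref{thm:equiintegrability}).
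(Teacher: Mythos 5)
Your proof is correct, but it follows a genuinely different route from the one in the paper. Where the paper obtains weak $L^1$-compactness of the whole family $(v_\e)_\e$ by propagating the equi-integrability of $(v_0^\e)_\e$ through the parabolic equation (de la Vall\'ee-Poussin plus a renormalization with $\Psi'$), and then identifies the limit by a duality argument --- testing against the solution $\vartheta_\e$ of the backward viscous adjoint problem with forcing, controlling the DiPerna--Lions commutators $r_v^{\e,\delta}, r_\vartheta^{\e,\delta}$, and matching the resulting pairing with the Lagrangian solution via the Duhamel formula \eqref{eq:duhamel} --- you instead truncate the limit datum and exploit linearity together with the $L^1$-contraction \eqref{eq:enunciato_stima_for_all_s}, which reduces everything to bounded data; there the product $\tilde v_\e\b$ is integrable, so you can pass to the limit directly in the weak formulation and quote DiPerna--Lions uniqueness for \emph{bounded} distributional solutions (note the paper relies on exactly the same uniqueness input, applied to the backward problem \eqref{eq:transport_buono}, so neither proof is ``more elementary'' in that respect). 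Your truncation step is in effect Lemma \ref{lem:decomposition-equi} applied to the single function $u_0$, and it buys you a shorter argument that avoids commutators entirely; the paper's equi-integrability/duality route is closer to \cite{DPL} and is the version that survives when one cannot reduce to bounded data by linearity. Your upgrade to strong convergence (weak lower semicontinuity forces $\|\tilde v_\e(t,\cdot)\|_{L^2}\to\|g\|_{L^2}$, hence vanishing of the dissipation and norm-plus-weak convergence in the Hilbert space $L^2$) is a clean self-contained replacement for the citation of \cite[Lemma 3.3]{CCS4}, and it delivers Remark \ref{rmk:no_anom_diss} as a byproduct. Two small points you should make explicit: the energy \emph{identity} with equality for merely $W^{1,1}$ drifts is part of the Le Bris--Lions theory behind Proposition \ref{prop:parabolic_wp} (the paper also invokes it in Remark \ref{rmk:no_anom_diss}), and the uniformity in $t$ of $\langle\tilde v_\e(t,\cdot),g^{\sf L}(t,\cdot)\rangle\to\|g\|_{L^2}^2$ needs the strong $L^2$-compactness of the set $\{g^{\sf L}(t,\cdot):t\in[0,T]\}$, which follows from the $L^2$-continuity in time of the Lagrangian solution with bounded datum.
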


We begin with the following simple remark: 

\begin{remark}[Equation with a forcing term]\label{rem:forcing}  The same conclusions of Proposition \ref{prop:parabolic_wp} apply as well to the equation with a forcing term. More precisely, if $\chi \in C^\infty((0,T) \times \T^d)$ is a smooth function and $v_0 \in L^\infty(\T^d)$, then the problem 
	\begin{equation*}
		\begin{cases}
			\partial_t v + \b \cdot \nabla v = \Delta v + \chi &  \text{ in } (0,T) \times \T^d \\
			v\vert_{t=0}=v_0 & \text{ in } \T^d 
		\end{cases}
	\end{equation*}
	has a unique solution $v \in L^\infty((0,T); L^2(\T^d)) \cap L^2((0,T),H^1(\T^d))$. Observe also that via the transformation $v(t,x) \mapsto v(T-t,- x)$ we deduce well-posedness results also for the backward equation 
	\begin{equation}\label{eq:AD_dual}
		\begin{cases}
			-\partial_t v - \b \cdot \nabla v = \Delta v + \chi &  \text{ in } (0,T) \times \T^d \\
			v\vert_{t=T}=v_T & \text{ in } \T^d.
		\end{cases}
	\end{equation}
	Notice that if $v_T = 0$ then the problem \eqref{eq:AD_dual} admits a unique solution in $L^\infty((0,T); L^\infty(\T^d))$ and that it holds  
	\begin{equation*}
		\| v \|_{L^\infty((0,T); L^\infty(\T^d))} \le C(\Vert \chi\Vert_{C^0(\T^d)}) < +\infty. 
	\end{equation*} 
\end{remark}

\begin{proof}[Proof of Theorem \ref{thm:main}]
	We split the proof in several steps. 
	
	{\bf \emph{Step 1. Parabolic well-posedness and compactness (equi-integrability).}} We begin with the study of the problem \eqref{eq:VV_2}. From Proposition \ref{prop:parabolic_wp}, we deduce that for every fixed $\e>0$ there exists a unique function $v_\e \in L^\infty((0,T); L^\infty(\T^d)) \cap L^2((0,T); H^1(\T^d))$ solving \eqref{eq:VV_2}, which moreover satisfies
	\begin{equation*}
		\| v_\e \|_{L^\infty((0,T); L^s(\T^d))} \le \| v_0^\e \|_{L^s(\T^d)}, 
	\end{equation*}
	for any $s \in [1,+\infty]$. Since $u_0 \in L^1$, the family $(v_\e)_{\e}$ is in general not equi-bounded neither in $L^\infty((0,T); L^\infty(\T^d))$ nor in $L^2((0,T);H^1(\R^d))$. However, since $v_0^\e \to u_0$ strongly in $L^1(\R^d)$, we get for $s=1$
	\begin{equation}\label{eq:bound_L^q}
		\| v_\e \|_{L^\infty((0,T); L^1(\T^d))} \le \| v_0^\e \|_{L^1(\T^d)} \le C < + \infty 
	\end{equation}
	for some constant $C>0$ independent of $\e$. This is, however, still not sufficient to obtain weak compactness in $L^1$, as we need to show the equi-integrability of the family $(v_\e)_{\e>0}$. To do so, we argue in the following way: since $v_0^\e \to u_0$ strongly in $L^1(\T^d)$, by Theorem \ref{thm:equiintegrability}, there exists a  convex, increasing function $\Psi \colon [0,+\infty] \to [0,+\infty]$ such that $\Psi(0)=0$ and 
	\begin{equation}\label{eq:const_def}
		\lim_{s \to \infty} \frac{\Psi(s)}{s} = \infty \qquad \text{and} \qquad \sup_{\e>0} \int_{\R^d} \Psi(|v_0^\e(x)|) \, \de x =: C < \infty. 
	\end{equation}
	Without loss of generality, we can assume that $\Psi$ is smooth. By an easy approximation argument (as already done several times before), we can multiply the equation \eqref{eq:VV_2} by $\Psi'(|v_\e|)$ and we obtain
	\begin{equation*}
		\frac{d}{dt} \int_{\T^d} \Psi(|v_\e(\tau,x)|) \, \de x  + \e \int_{\T^d} \Psi^{\prime\prime}(|v_\e(\tau,x)|) |\nabla(|v_\e|)|^2 \, \de x = 0.  
	\end{equation*}
	The convexity of $\Psi$ and an integration in time on $(0,t)$ give
	\begin{equation*}
		\int_{\T^d} \Psi(|v_\e(t,x)|) \, \de x  \le C,
	\end{equation*}
	where $C$ is the same constant as in \eqref{eq:const_def}. Since $t$ is arbitrary, 
	\begin{equation}\label{eq:bound_equi_integrability}
		\sup_{t \in (0,T)} \int_{\T^d} \Psi(|v_\e(t,x)|) \, \de x  \le C.
	\end{equation}
	Since the constant $C$ is independent of $\varepsilon$, we can resort to Point {\rm (iii)} of Theorem \ref{thm:equiintegrability} and we infer that the family $(v_\e)_{\e>0}$ is weakly-precompact in $L^\infty((0,T); L^1(\T^d))$. Therefore, there exists a function $u^{\mathsf {V}} \in L^\infty((0,T); L^1(\T^d))$ such that $v_\e \rightharpoonup u^{\mathsf V}$ in $L^\infty((0,T); L^1(\T^d))$ (up to a non-relabelled subsequence). 
	
	{ \bf \emph{Step 2. Identification of the limit via duality I.}} 
	We now want to exploit a duality argument. Let $\chi \in C^\infty((0,T) \times \T^d)$ be arbitrary. By Remark \ref{rem:forcing}, for every $\e>0$, there exists a unique function $\vartheta_\e \in L^\infty((0,T); L^\infty(\T^d)) \cap L^2((0,T); H^1(\T^d))$ solving 
	\begin{equation}\label{eq:AD_forcing}
		\begin{cases}
			-\partial_t \vartheta_\e  - \b \cdot \nabla \vartheta_\e = \e \Delta \vartheta_\e + \chi &   \text{ in } (0,T) \times \T^d \\
			\vartheta_\e\vert_{t=T} = 0 & \text{ in } \T^d.
		\end{cases}
	\end{equation}
	The family $(\vartheta_\e)_{\e>0}$ is uniformly bounded in $L^\infty((0,T); L^\infty(\T^d))$
	so, up to a subsequence, the family $(\vartheta_\e)_{\e>0}$ converges in $C([0,T]; w^*-L^{\infty}(\T^d))$ to a function $\vartheta\in C([0,T]; w^*-L^{\infty}(\T^d))$ solving the backward, inhomogenous transport equation 
	\begin{equation}\label{eq:transport_buono}
		\begin{cases}
			-\partial_t \vartheta  - \b \cdot \nabla \vartheta =  \chi &  \text{ in } (0,T) \times \T^d \\
			\vartheta\vert_{t=T} = 0  & \text{ in } \T^d.
		\end{cases}
	\end{equation}
	By \cite{DPL}, the problem \eqref{eq:transport_buono} is well-posed in $L^\infty((0,T);L^\infty(\T^d))$ and thus $\vartheta$ coincides with  \emph{the} unique solution to \eqref{eq:transport_buono} which lies in  $C([0,T]; L^{\infty}(\T^d))$. In addition, this implies that the whole sequence $(\vartheta_\e)_{\e>0}$ converges to $\vartheta$ (in other words, the passage to a subsequence is not needed). For future use, observe that it also holds that 
	\begin{equation}\label{eq:initial_datum_weak_continous}
		\vartheta_{\e}(0, \cdot ) \overset{*}{\rightharpoonup} \vartheta(0,\cdot ) \qquad \text{ in $w^*-L^{\infty}(\T^d)$}
	\end{equation}
	and via a straightforward computation one also obtains the Duhamel representation formula 
	\begin{equation}\label{eq:duhamel}
		\vartheta(t,\bm X_{0,t}(x)) = \int_t^T \chi(s, \bm X_{0,s}(x)) \, \de s, \qquad \forall x \in \T^d, \, t \in [0,T].
	\end{equation}
	
	{ \bf \emph{Step 3. Identification of the limit via duality II.}}
	We now consider the regularised versions of problems \eqref{eq:VV_2} and \eqref{eq:AD_forcing}. Let $\rho$ be a non-negative, radially symmetric convolution kernel and set for $\e, \delta>0$
	\begin{equation*}
		v_{\e}^{\delta} := v_\e \ast \rho^\delta, \qquad  \vartheta_{\e}^{\delta} := \vartheta_\e \ast \rho^\delta.
	\end{equation*}
	The smooth functions $v^{\e,\delta}$ and $\vartheta^{\e,\delta}$ solve respectively the problems 
	\begin{equation}\label{eq:VV_con_commutatore}
		\begin{cases}
			\partial_t v_{\e}^{\delta}+ \b \cdot \nabla v_{\e}^{\delta} = r_v^{\e,\delta} + \e\Delta v_{\e}^{\delta} &  \text{ in } (0,T)  \times \T^d \\
			v_{\e}^{\delta}\vert_{t=0}=v_0^{\e,\delta} & \text{ in } \T^d
		\end{cases}
	\end{equation}
	and
	\begin{equation}\label{eq:VV_duale_forzante_con_commutatore}
		\begin{cases}
			-\partial_t \vartheta_{\e}^{\delta} - \b \cdot \nabla \vartheta_{\e}^{\delta} = r_\vartheta^{\e,\delta} + \e\Delta \vartheta_{\e}^{\delta} + \chi^\delta &  \text{ in } (0,T) \times \T^d \\
			\vartheta_{\e}^{\delta} \vert_{t=T}=0 & \text{ in } \T^d,
		\end{cases}
	\end{equation}
	where 
	\begin{equation*}
		\chi^\delta := \chi \ast \rho^\delta
	\end{equation*}
	and the commutators are defined as 
	\begin{equation*}
		r_v^{\e,\delta} :=\b \cdot \nabla v_{\e}^{\delta} - (\b\cdot \nabla v_\e) \ast \rho^\delta \qquad \text { and } \qquad 
		r_\vartheta^{\e,\delta} :=\b \cdot \nabla \vartheta_{\e}^{\delta} - (\b\cdot \nabla \vartheta_\e) \ast \rho^\delta. 
	\end{equation*} 
	
	Multiplying \eqref{eq:VV_con_commutatore} times $\vartheta_{\e}^{\delta}$, applying Fubini's Theorem and integrating by parts in time and space we obtain 
	\begin{equation*}
		\begin{split}
			0 & = \iint_{(0,T) \times \T^d} \left[ (\partial_t v_{\e}^{\delta})\vartheta_{\e}^{\delta}+ \b \cdot (\nabla v_{\e}^{\delta} )\vartheta_{\e}^{\delta} - r_v^{\e,\delta}\vartheta_{\e}^{\delta} - \e(\Delta v_\e)\vartheta_{\e}^{\delta} \right] \, \de t \de x \\
			& = \iint_{(0,T) \times \T^d} v_{\e}^{\delta} \left[ - \partial_t \vartheta_{\e}^{\delta} - \b \cdot \nabla \vartheta_{\e}^{\delta} - \e \Delta \vartheta_{\e}^{\delta} \right] \, \de t\, \de x  - \int_{\T^d} v^{\e,\delta}(0,x)\vartheta_{\e}^{\delta}(0,x)\, \de x\\ 
			& \qquad  -\iint_{(0,T) \times \T^d} r_v^{\e,\delta}\vartheta_{\e}^{\delta} \, \de t \de x\\
			& \hspace{-.25cm}\overset{\eqref{eq:VV_duale_forzante_con_commutatore}}{=}  \iint_{(0,T) \times \T^d} v_{\e}^{\delta} ( r_\vartheta^{\e,\delta} + \chi^\delta )\, \de t\, \de x - \int_{\T^d} v^{\e,\delta}(0,x)\vartheta_{\e}^{\delta}(0,x)\, \de x - \iint_{(0,T) \times \T^d} r_v^{\e,\delta}\vartheta_{\e}^{\delta} \, \de t \de x \\
			& = \iint_{(0,T) \times \T^d} v_{\e}^{\delta}  \chi^\delta\, \de t \de x + \iint_{(0,T) \times \T^d} (v_{\e}^{\delta} r_\vartheta^{\e,\delta}  - r_v^{\e,\delta}\vartheta_{\e}^{\delta}) \, \de t\de x - \int_{\T^d} v^{\e,\delta}(0,x)\vartheta_{\e}^{\delta}(0,x)\, \de x \\
			& =: \textrm{(I) + (II) + (III)}.
		\end{split}
	\end{equation*}
	Keeping $\e>0$ fixed, we now send $\delta \to 0$. The two commutators can be written in the form
	\begin{equation*}
		r_v^{\e, \delta}(t,x) = \int_{\T^d} v_\e(t,x+\delta y) \left[ \frac{\b(t,x+\delta y) - \b(t,x)}{\delta}\right] \cdot \nabla \rho(y) \,\de y
	\end{equation*}
	and  
	\begin{equation*}
		r_\vartheta^{\e, \delta}(t,x) = \int_{\T^d} \vartheta_\e(t,x+\delta y) \left[ \frac{\b(t,x+\delta y) - \b(t,x)}{\delta}\right] \cdot \nabla \rho(y) \,\de y. 
	\end{equation*}
	Since $v_\e,\vartheta_\e \in L^\infty((0,T); L^\infty(\T^d))$, arguing as in the proof of Proposition \ref{prop:parabolic_wp}, we can conclude that both $ r_v^{\e, \delta}$ and $r_\vartheta^{\e, \delta}$ converge to $0$ strongly in $L^1((0,T) \times \T^d)$ as $\delta \to 0$. This observation, combined with the uniform $L^\infty$ bounds on $v_{\e}^{\delta}$ and $\vartheta_{\e}^{\delta}$, shows that $\textrm{(II)} \to 0$ as $\delta \to 0$.  
	
	For the term $\textrm{(I)}$, instead, we can use the strong convergence of $v_{\e}^{\delta} \to v_\e$ and the uniform convergence of $\chi^\delta \to \chi$. Finally, for $\textrm{(III)}$, by standard results about convolutions 
	\begin{equation*}
		v^{\e,\delta}(0, \cdot ) \to v^{\e}_0 
	\end{equation*}
	strongly in $L^1(\T^d)$ as $\delta \to 0$; furthermore, we have 
	\begin{equation*}
		\vartheta_{\e}^{\delta}(0,\cdot) \to \vartheta_{\e}(0,\cdot)
	\end{equation*}
	weakly$^*$ in $L^{\infty}(\T^d)$ as $\delta \to 0$. Such convergence follows from a standard argument in the framework of evolutionary PDEs (see e.g. \cite[Lemma 3.7]{DeLellisNote}) which establishes, in particular, the weak continuity in time of the solutions to advection-diffusion or transport equations. 
	
	Thus, for any $\e>0$, it holds 
	\begin{equation*}
		\iint_{(0,T) \times \T^d} v_{\e}(t,x)  \chi(t,x)\, \de t \de x = \int_{\T^d} v_0^\e(x)\vartheta_{\e}(0,x)\, \de x .
	\end{equation*}
	We now send $\e \to 0$, getting
	\begin{equation}\label{eq:one}
		\iint_{(0,T)\times \T^d} u^{\mathsf V} (t,x)\chi(t,x)\, \de t \de x = \int_{\T^d} u_0(x)\vartheta(0,x)\, \de x.  
	\end{equation}
	In the last passage, we have used:
	\begin{itemize}
		\item $v^\e \rightharpoonup u^{\mathsf V}$ weakly in $L^\infty ((0,T); L^1(\T^d))$; 
		\item $v^\e_0 \to u_0$ strongly in $L^1(\T^d)$; 
		\item  $\vartheta_{\e}(0,\cdot) \rightharpoonup \vartheta(0,\cdot)$ weakly$^*$ in $L^{\infty}(\T^d)$ by \eqref{eq:initial_datum_weak_continous}.
	\end{itemize}
	
	{ \bf \emph{Step 4. Duality of the Lagrangian solution.}}
	A direct computation shows that the Lagrangian solution $u^{\mathsf L}$ satisfies 
	\begin{equation}\label{eq:two}
		\begin{split}
			\iint_{(0,T) \times \T^d} u^{\mathsf L}(t,x) \chi(t,x)\, \de t \de x & = \iint_{(0,T) \times \T^d} u_0(\bm X_{t,0}(x)) \chi(t,x)\, \de t\de x \\
			& =\int_0^T \int_{\T^d} u_0(y) \chi(t,\bm X_{0,t}(y))\, \de t \, \de y \\
			& =\int_{\T^d} u_0(y)  \int_0^T  \chi(t,\bm X_{0,t}(y))\, \de t \, \de y \\
			& \hspace{-.15cm} \overset{\eqref{eq:duhamel}}{=}  \int_{\T^d} u_0(x)\vartheta(0,x)\, \de x .
		\end{split}
	\end{equation}
	Hence, comparing \eqref{eq:one} and \eqref{eq:two}, we obtain 
	\begin{equation*}
		\iint_{(0,T) \times \T^d} (u^{\mathsf V}(t,x) - u^{\mathsf L}(t,x)) \chi(t,x)\, \de t\de x = 0. 
	\end{equation*}
	Being $\chi\in C^\infty((0,T) \times \T^d)$ arbitrary, we have thus obtained $u^{\mathsf V} = u^{\mathsf L}$ a.e. and this concludes the proof.
	
	{ \bf \emph{Step 5. Upgrade to strong convergence.}} 
	The convergence of $(v^\e)_{\e>0}$ to the Lagrangian solution is strong in $C([0,T]; L^1(\T^d))$. This follows from \cite[Lemma 3.3]{CCS4}: indeed, the regularity assumption (H1') of \cite[Lemma 3.3]{CCS4} includes the case $\b\in L^1((0,T);W^{1,1}(\T^d))$, and the growth assumption (H2) is trivially satisfied as already remarked in Section 3.
\end{proof}

We observe that Theorem \ref{thm:main}, together with the same arguments of Remark \ref{rmk:no_anom_diss},
implies that \emph{no anomalous dissipation is possible} for the vanishing viscosity limit in the case $\b \in L^1((0,T);W^{1,1}(\T^d))$ and $u \in L^\infty((0,T); L^2(\T^d))$, even though the solution lacks the integrability required for the DiPerna-Lions' theory to apply.


\begin{thebibliography}{100}
	\bibitem{Ai} \textsc{M. Ainzenmann}: \emph{On vector fields as generators of flows: a counterexample to Nelson's conjecture.} Ann. Math., $\mathbf{107}$, (1978), 287-296.
	
	\bibitem{ABC14} \textsc{G. Alberti, S. Bianchini, G. Crippa}: \emph{A uniqueness result for the continuity equation in two dimensions.} J. Eur. Math. Soc. (JEMS), $\mathbf{16}$, (2014), 201-234.	
	
	\bibitem{A04} \textsc{L. Ambrosio}: \emph{Transport equation and Cauchy problem for BV vector fields.} Inv. Math., $\mathbf{158}$, (2004), 227-260.
	
	\bibitem{BB} \textsc{S. Bianchini, A. Bressan}: \emph{Vanishing viscosity solutions of nonlinear hyperbolic systems.} Ann. of Math. (2), $\mathbf{161}$, (2005), 223-342.
	
	
	\bibitem{BC} \textsc{F. Bouchut, G. Crippa}: \emph{Lagrangian flows for vector fields with gradient given by a singular integral.} J. Hyperbolic Diff. Equ., $\mathbf{10}$, (2013), 235-282.
	
	\bibitem{BN} \textsc{E. Bru\'e, Q.-H. Nguyen}: \emph{Advection diffusion equation with Sobolev velocity fields}. Commun. Math. Phys. $\mathbf{383}$, 465-487 (2021).
	
	
	\bibitem{CJ} \textsc{N. Champagnat, P. Jabin}: \emph{Strong solutions to stochastic differential equations with rough coefficients.} Ann. Probab., $\mathbf{46}(3)$ (2018), 1498-1541.
	
	
	\bibitem{CL} \textsc{A. Cheskidov, X. Luo}: \emph{Nonuniqueness of weak solutions for the transport equation at critical space regularity}. Annals of PDE $\mathbf{7}$, 2 (2021).
	
	\bibitem{CCS2}\textsc{G. Ciampa, G. Crippa, S. Spirito}: \emph{Smooth approximation is not a selection principle for the transport equation with rough vector field}. Calc. Var. Partial Differential Equations $\mathbf{59}$, 13 (2020).
	
	\bibitem{CCS4} \textsc{G. Ciampa, G. Crippa, S. Spirito}: \emph{Strong convergence of the vorticity for the 2D Euler Equations in the inviscid limit}. Arch. Rational Mech. Anal. $\mathbf{240}$, 295-326 (2021).
	
	\bibitem{CCS3} \textsc{G. Ciampa, G. Crippa, S. Spirito}: \emph{Weak solutions obtained by the vortex method for the 2D Euler equations are Lagrangian and conserve the energy}, J. Nonlinear Sci. $\mathbf{30}$, 2787-2820 (2020).
	
	
	
	\bibitem{CNSS} \textsc{G. Crippa, C. Nobili, C. Seis, S. Spirito}: \emph{Eulerian and Lagrangian solutions to the continuity and Euler equations with $L^1$ vorticity}. SIAM J. Math. Anal., $\mathbf{49}$ (2017), no. 5, 3973-3998.
	
	\bibitem{CS} \textsc{G. Crippa, S. Spirito}: \emph{Renormalized solutions of the 2D Euler equations.} Comm. Math. Phys., $\mathbf{339}$, (2015), 191-198.
	
	\bibitem{DeLellisNote} \textsc{C. De Lellis}: \emph{Notes on hyperbolic systems of conservation laws and transport equations.} In Handbook of differential equations: evolutionary equations. Vol. III, Handb. Differ. Equ., pages 277–382. Elsevier/North-Holland, Amsterdam, 2007
	
	\bibitem{DLG} \textsc{C. De Lellis, V. Giri}: \emph{Smoothing does not give a selection principle for transport equations with bounded autonomous fields}. Ann. Math. Qu\'ebec, $\mathbf{46}$, 27–39 (2022). 	
	\bibitem{Dep} \textsc{N. Depauw}: \emph{Non unicit\'e des solutions born\'es pour un champ de vecteurs BV en dehors d'un
		hyperplan.} C.R. Math. Sci. Acad. Paris, $\mathbf{337}$, (2003), 249-252.
	
	\bibitem{DPL} \textsc{R. J. DiPerna, P.-L. Lions}: \emph{Ordinary differential equations, transport theory and Sobolev spaces.} Inv. Math., $\mathbf{98}$, (1989), 511-547.
	
	\bibitem{DEIJ} \textsc{T. D. Drivas, T. M. Elgindi, G. Iyer, I. J. Jeong}: \emph{Anomalous dissipation in passive scalar transport}. Arch. Rational Mech. Anal. $\mathbf{243}$, 1151–1180 (2022). 
	
	\bibitem{EV} \textsc{L.C. Evans}: \emph{Partial differential equations}, volume 19 of Graduate Studies in Mathematics. American Mathematical Society, Providence, RI, second edition, 2010.
	
	
	\bibitem{KR} \textsc{Kru\v zkov, S.N.}: \emph{First order quasilinear equations with several independent variables.} Mat. Sb. (N.S.) $\mathbf{123}$, 228-255 (1970).
	
	\bibitem{K} \textsc{H. Kunita}: \emph{Stochastic differential equations and stochastic flows of diffeomorphisms}. Lecture Notes in Math., Vol. 1097, Springer-Verlag, 1984, pp. 143–303.
	
	
	\bibitem{LA2}  \textsc{O. A. Lady\v{z}enskaja, V. A. Solonnikov, N. N. Ural'ceva}: \emph{Linear and quasilinear equations of parabolic type}, Translations of Mathematical Monographs, Vol. 23, American Mathematical Society, Providence, R.I., 1968.
	
	
	
	\bibitem{LBL_book} \textsc{C. Le Bris, P.-L. Lions}: \emph{Parabolic Equations with Irregular Data and Related Issues}. De Gruyter, 2019.
	
	\bibitem{LBL} \textsc{C. Le Bris, P.-L. Lions}: \emph{Renormalized solutions of some transport equations with partially $W^{1,1}$ velocities and applications.} Annali di Matematica $\mathbf{183}$, 97-130 (2004).
	
	\bibitem{LiLuo} \textsc{H. Li, D. Luo}: \emph{Quantitative stability estimates for Fokker-Planck equations}. J. Math. Pures Appl $\mathbf{(9)122}$ (2019), 125-163.
	
	\bibitem{Lions_video} \textsc{P.-L. Lions}: \emph{Sur les équations de transport}, Videolectures in French at Collège de France, available at  \url{https://www.college-de-france.fr/site/en-pierre-louis-lions/course-2021-2022.htm}.
	
	\bibitem{masmoudi} \textsc{N. Masmoudi}: \emph{Remarks about the Inviscid Limit of the {N}avier-{S}tokes System}. Commun. Math. Phys. $\mathbf{270}$, 777–788 (2007).
	
	
	\bibitem{MS3} \textsc{S. Modena, G. Sattig}: \emph{Convex integration solutions to the transport equation with full dimensional concentration.} Ann. Inst. H. Poincaré Anal. Non Linéaire $\mathbf{37}$ (2020), no. 5, 1075–1108.
	
	\bibitem{MS} \textsc{S. Modena, L. Sz\`ekelyhidi Jr}: \emph{Non-uniqueness for the transport equation with Sobolev vector fields.} Annals of PDE, $\mathbf{4}$, no. 2, (2018), 18.
	
	\bibitem{MS2} \textsc{S. Modena, L. Sz\`ekelyhidi Jr}: \emph{Non-renormalized solutions to the continuity equation.} Calc. Var. Partial Differential Equations $\mathbf{58}$, 208 (2019).
	
	\bibitem{S21} \textsc{V. Navarro-Fernández, A. Schlichting, C. Seis}:
	\emph{Optimal stability estimates and a new uniqueness result for advection-diffusion equations}. \url{https://arxiv.org/abs/2102.07759}
	
	
\end{thebibliography}
\end{document}